\def\dd{\mathcal D}
\def\sd{\sum_{I \in \dd}}
\def\fd{\mathbf f}
\def\Fd{\mathbf F}
\def\gd{\mathbf g}
\def\Gd{\mathbf G}
\def\fb{\mathcal B}
\def\lx{L^p(\mathbb{R};X)}
\def\lxs{L^{p'}(\mathbb{R};X^{\!*})}
\newtheorem{theorem}{Theorem}[section]
\newtheorem{lemma}[theorem]{Lemma}
\newtheorem{corollary}[theorem]{Corollary}
\newtheorem{remark}[theorem]{Remark}
\title{Linear bounds for Calder\'{o}n-Zygmund operators with even kernel on UMD spaces}
\author{Sandra Pott}
\address{Centre for Mathematical Sciences, University of Lund, P.O. Box 118, SE-221 00 Lund, Sweden}
\email{sandra@maths.lth.se}
\author{Andrei Stoica}
\address{Centre for Mathematical Sciences, University of Lund, P.O. Box 118, SE-221 00 Lund, Sweden}
\email{andrei@maths.lth.se}
\subjclass[2010]{42B20, 60G46, 46B09, 46B28, 26B25}
\keywords{Calder\'{o}n-Zygmund operator, UMD space, martingale transform, Bellman function, dyadic Haar shift, Schur multiplier}
\begin{document}

\maketitle

\begin{abstract}
It is well-known that several classical results about Calder\'{o}n-Zygmund singular integral operators can be extended to \(X\)-valued functions if and only if the Banach space \(X\) has the UMD property. The dependence of the norm of an \(X\)-valued Calder\'{o}n-Zygmund operator on the UMD constant of the space \(X\) is conjectured to be linear. We prove that this is indeed the case for sufficiently smooth Calder\'{o}n-Zygmund operators with cancellation, associated to an even kernel.

Our method uses the Bellman function technique to obtain the right estimates for the norm of dyadic Haar shift operators. We then apply the representation theorem of
T. Hyt\"{o}nen to extend the result to general Calder\'{o}n-Zygmund operators.

\end{abstract}

\section{Introduction}

The study of vector-valued singular integral operators emerged at the beginning of the 1980's, when D.L. Burkholder \cite{Bu83} and J. Bourgain \cite{Bo86} characterized the equivalence between a probabilistic/geometric property of a Banach space \(X\) and the boundedness of the tensor extension of the Hilbert transform to \(X\)-valued \(L^p\)-functions. This property of the Banach space \(X\) is now known as the UMD (unconditionality of martingale differences) property. It is also well-known (see \cite{Bo86}) that a space \(X\) has the UMD property if and only if any singular integral operator that is bounded on \(L^q(\mathbb{R})\), for some \(q \in (1,\infty)\), can be extended continuously to \(\lx\), for all \(p \in (1,\infty)\). The UMD spaces are therefore the right setting where one can extend classical results from the Calder\'{o}n-Zygmund and Littlewood-Paley theories to vector-valued functions. One such example is T. Figiel's extension of the \(T(1)\)-theorem to UMD space-valued functions (see \cite{Fi90}). The UMD property also became important in connection to Partial Difference Equations, where functions take values in Banach spaces.

In the 1970's, several years before Burkholder's work, R.A. Hunt, B. Muckenhoupt and R.L. Wheeden \cite{HuMuWh73} and R.R. Coifman and C. Fefferman \cite{CoFe74} showed that a Calder\'{o}n-Zygmund singular integral operator is bounded on the weighted space \(L^p(w)\) if and only if the weight \(w\) belongs to the so-called \(A_p\) class. For the last two decades, an important open problem in Harmonic Analysis was to characterize the dependence of the operator norm on the \(A_p\) characteristic, \([w]_{A_p}\), of the weight. For \(p=2\) this dependence was conjectured to be linear in \([w]_{A_2}\); the problem has become known as the \(A_2\) conjecture. The first step was taken by J. Wittwer \cite{Wi00}, who proved the \(A_2\) conjecture for the martingale transform. Using Bellman function techniques, S. Petermichl and A. Volberg \cite{PeVo02} showed the conjecture for the Beurling-Ahlfors transform. It took a few more years until the \(A_2\) conjecture was proved for the Hilbert transform by S. Petermichl (see \cite{Pe07}). The conjecture was finally settled in 2010 by T.P. Hyt\"{o}nen \cite{Hy12a}. The main ingredient in his proof is the pointwise representation of a general Calder\'{o}n-Zygmund operator as a weighted average over an infinite number of randomized dyadic systems of some simpler operators (called dyadic Haar shifts) in such a way that the estimates for the dyadic Haar shifts depend polynomially on the complexity. We made this digression in order to emphasize the similar flavour between our result and the (now) \(A_2\) theorem. The majority of our arguments was inspired by ideas developed for the proof of the \(A_2\) theorem, as it is also the case in \cite{Hy12b}.

One of the open problems in vector-valued Harmonic Analysis is to describe the dependence of the norm of a vector-valued Calder\'{o}n-Zygmund operator on the UMD constant of the underlying Banach space. The best known estimate for general UMD spaces is quadratic in terms of \(\beta_{p,X}\), even for the Hilbert transform. The linear bound for singular integrals in the special case of spaces of the form \(L^p(\mathcal{M}_p)\), where \(\mathcal{M}_p\) is the noncommutative \(L^p\)-space associated to a semifinite von Neumann algebra \(\mathcal{M}\) equipped with a normal, semifinite, faithful trace, was obtained by J. Parcet \cite{Pa09}. In abstract UMD spaces, the linear bound has only been shown for the Beurling-Ahlfors transform and for some special classes of operators with even kernel (see \cite{GeMSSa10,Hy07}). The linear estimate for the vector-valued martingale transform is essentially the definition of the UMD property. It is also interesting to mention that, as was the case with the \(A_2\) theorem, the linear bound for the Beurling-Ahlfors transform has been known for some time, yet a similar estimate for the Hilbert transform is still open.

In this paper we prove the linear bound for sufficiently smooth Calder\'{o}n-Zygmund singular integrals with even kernel in general UMD spaces. Our approach uses Bellman function arguments introduced by S. Treil in \cite{Tr11}, but adapted to the vector-valued setting. Using Hyt\"{o}nen's representation of a Calder\'{o}n-Zygmund operator, it is enough to obtain the right estimate for the dyadic Haar shift operators. Unfortunately, the norm of these operators depends exponentially on their complexity, but this issue is overcome by the extra assumption on the smoothness of the Calder\'{o}n-Zygmund operator. The idea to use martingale transforms to estimate dyadic shifts is not new (see \cite{Fi90}). Since we want to obtain a linear bound in terms of \(\beta_{p,X}\) for the norm of dyadic Haar shifts, we have to use the UMD property only once (unlike, for example, Hyt\"{o}nen's paper \cite{Hy12b}, where this property is used twice, thus obtaining the quadratic bound for arbitrary Calder\'{o}n-Zygmund singular integrals). We will decompose a dyadic Haar shift of complexity \(k\) into \(k\) ``slices'' that can be seen as martingale transforms. The main idea is to linearize the norm of these slices and then use the Bellman function to estimate each summand. In order to do this, we start with a standard dyadic martingale of points from the domain of the Bellman function, where at each point we have two choices with equal probability. We will then modify the martingale, but preserving the initial point and the endpoints, and probabilities. From the starting point, instead of going to the next level in the standard martingale, we move with probabilities \(1/2\) to two new points that are ``far enough'' from the initial point, but also ``almost averages'' of the endpoints. We can still move from these new points to the endpoints, this time using a modified dyadic martingale, where at each point we have two choices with ``almost equal'' probability. This new martingale is constructed in such a way that the probabilities of moving from the starting point to the endpoints are still equal, as in the case of the standard martingale. Although we have used probabilistic terms, the formal proof involving the Bellman function is elementary.

The paper is organized as follows: in Section 2 we recall the necessary definitions and results that we are using. Then we state our main result (Theorem \ref{mainthm}) and show that it is enough to obtain a corresponding estimate for dyadic Haar shift operators, which is the content of Theorem \ref{mainshift}. In Section 3 we use the UMD property to relate the norm of a dyadic Haar shift to an expression that will be controlled by the Bellman function. Section 4 contains the definition of the Bellman function associated to our problem and the description of its properties. In Section 5 we formulate and prove the main technical result of the paper, which is inspired by \cite{Tr11}. Finally, in Section 6, we show how the main estimate from the previous section is used to conclude the proof of Theorem \ref{mainshift}.

\section{Definitions and statement of the main results}

In this section, we recall some well-known notions and results that we are going to use later on.

\subsection{Calder\'{o}n-Zygmund operators}

Let \(\Delta=\{(x,x): x \in \mathbb{R}\}\) be the diagonal of \(\mathbb{R} \times \mathbb{R}\). We say that a function \(K: \mathbb{R} \times \mathbb{R} \setminus \Delta \to \mathbb{C}\) is a standard Calder\'{o}n-Zygmund kernel if there exists \(\delta>0\) such that
\[|K(x,y)| \leq \frac{C}{|x-y|},\]
\[|K(x,y)-K(x,z)|+|K(y,x)-K(z,x)| \leq C \frac{|y-z|}{|x-y|^{1+\delta}},\]
for all \(x,y,z \in \mathbb{R}\) with \(|x-y|>2|y-z|\).

An operator \(T\), defined on the class of step functions (which is dense in \(L^p(\mathbb{R})\) for all \(p \in (1,\infty)\)), is called a Calder\'{o}n-Zygmund operator on \(\mathbb{R}\) associated to \(K\) if it satisfies the kernel representation
\[Tf(x)=\int_{\mathbb{R}} K(x,y)f(y)\, dy, \qquad x \notin {\mathrm{supp}}\,f .\]

\subsection{Bochner-Lebesgue spaces}

Let \(X\) be a (real or complex) Banach space. A function \\ \(f:\mathbb{R} \to X\) is called weakly measurable if the map \(\mathbb{R} \ni t \mapsto \langle f(t),x^* \rangle \) is measurable for all \(x^* \in X^*\). We say that  \(f:\mathbb{R} \to X\) is separably valued if there exists a separable subspace \(X_0\) of \(X\) such that \(f(t) \in X_0\) for almost all \(t \in \mathbb{R}\). A function  \(f:\mathbb{R} \to X\) is called strongly measurable if it is both separably valued and weakly measurable.

For \(p \in [1,\infty)\), the Bochner-Lebesgue space \(\lx\) consists of all strongly measurable functions \(f:\mathbb{R} \to X\) such that
\[\|f\|_{\lx} := \Big(\int_{\mathbb{R}}\|f(t)\|^p_{X}\, \mathrm{d}t \Big)^{1/p} < \infty.\]
If \(f \in C^1_c(\mathbb{R})\) and \(x \in X\), we define the tensor product \(f \otimes x: \mathbb{R} \to X\) by \((f \otimes x)(t):=f(t)x\). The set consisting of finite linear combinations of functions of this type is denoted by \(C^1_c(\mathbb{R}) \otimes X\) and is a dense subspace of \(\lx\) if \(p \in [1,\infty)\). An operator \(T\), initially defined on scalar-valued functions, acts on a function \(f=\sum_{k=1}^n f_k \otimes x_k \in C^1_c(\mathbb{R}) \otimes X\) by
\[Tf(t)=\sum_{k=1}^n Tf_k(t)x_k \in X.\]

It is also worth mentioning that if \(p \in [1,\infty)\) and \(X\) is a reflexive space, the dual of \(\lx\) is \(\lxs\), where \(1/p+1/{p'}=1\).

\subsection{UMD spaces}

The natural setting when working with vector-valued singular integrals is the class of the so-called UMD spaces, which we will now introduce. Let \(X\) be a Banach space. A sequence \(\{d_n\}_{n \geq 1}\) of functions from \(\mathbb{R}\) into \(X\) is a martingale difference sequence if for all \(n \geq 1\) and all functions \(\varphi:X^n \to \mathbb{R}\),
\[\int_{\mathbb{R}} \varphi(d_1,\ldots,d_n)d_{n+1}\, \mathrm{d}t =0.\]
We say that \(X\) is a UMD (unconditional for martingale differences) space if there exists a constant \(C_p\) for one (or equivalently, for all) \(p \in (1,\infty)\) such that
\[\Big\|\sum_{k=1}^n \varepsilon_k d_k\Big\|_{\lx} \leq C_p \Big\|\sum_{k=1}^n d_k\Big\|_{\lx},\]
for all \(n \geq 1\), all \(X\)-valued martingale difference sequences \(\{d_k\}_{k=1}^{n}\), and all choices \(\{\varepsilon_k\}_{k=1}^{n}\) of signs from \(\{\pm 1\}\). For each \(p \in (1,\infty)\), the smallest constant in the previous inequality is denoted by \(\beta_{p,X}\) and is called the UMD\(_p\) constant of \(X\). It is obvious that \(\beta_{p,X} \geq 1\) for all \(p \in (1,\infty)\) and all UMD spaces \(X\). We also recall that UMD spaces are reflexive (see, for example, \cite{RdF86}).

The next theorem is due to Figiel and is the best known estimate for the norm of Calder\'{o}n-Zygmund singular integrals in general UMD spaces.

\begin{theorem} [\protect{Figiel \cite{Fi90}}]
Let \(X\) be a UMD space, \(1<p<\infty\), and \(\beta_{p,X}\) be the \(UMD_p\) constant of \(X\). Let \(T\) be a Calder\'{o}n-Zygmund operator on \(\mathbb{R}\) which satisfies the standard kernel estimates, the weak boundedness property \(|\langle T \chi_I, \chi_I \rangle | \leq C|I|\) for all intervals \(I\), and the vanishing paraproduct conditions \(T(1)=T^*(1)=0\). Then
\[\|T\|_{\lx \to \lx} \leq C \beta^2_{p,X},\]
where \(C\) depends only on the constants in the standard estimates and the weak boundedness property.
\end{theorem}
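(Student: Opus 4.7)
The plan is to reduce $T$ to a dyadic model and then apply the UMD property twice. Fix any dyadic grid $\mathcal{D}$ on $\mathbb{R}$ and let $\{h_I\}_{I\in\mathcal{D}}$ be the associated $L^2$-normalized Haar system. The vanishing paraproduct hypotheses $T(1)=T^*(1)=0$ together with the weak boundedness property and the standard kernel estimates imply the decay bound
\[
|t_{IJ}| := |\langle T h_I, h_J\rangle| \lesssim \Big(\frac{|I|\wedge|J|}{|I|\vee|J|}\Big)^{1/2+\delta}\frac{|I|^{1/2}|J|^{1/2}}{(|I|+|J|+\mathrm{dist}(I,J))^{1+\delta}}
\]
for all $I,J\in\mathcal{D}$. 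Formally, then,
\[
Tf = \sum_{I,J\in\mathcal{D}} t_{IJ}\, \langle f,h_I\rangle\, h_J.
\]

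The second step is to group pairs $(I,J)$ by their relative geometry: (a) equal intervals, (b) strictly nested, $J\subsetneq I$ with relative generation $m\geq 1$, and the symmetric case $I\subsetneq J$, (c) well-separated intervals of comparable scale, organized by translation parameter $\tau$. Each family yields a dyadic model operator: a Haar multiplier on the diagonal, a dyadic shift of complexity $m$ on the nested part, and a translated Haar multiplier on the separated part. The kernel decay above gives summable factors $2^{-\delta m/2}$ (respectively $(1+|\tau|)^{-1-\delta}$) controlling the outer sums over $m$ and $\tau$.

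The third step is the $L^p(\mathbb{R};X)$ estimate of each model operator, and this is where the UMD property enters \emph{twice}. A typical dyadic shift of complexity $m$ can be written as $f\mapsto \sum_J \bigl(\sum_{I\supset J,\,I\in\mathcal{D}_m(J)} a_{IJ}\,\langle f,h_I\rangle\bigr) h_J$. One factors this as a composition of two martingale-transform type operations on $X$-valued Haar expansions: first a bounded reindexing/averaging on the input side, involving choices of signs along the Haar expansion of $f$, which costs $\beta_{p,X}$; then a second martingale-transform step on the output side to reassemble the expansion in $\{h_J\}_J$, costing another $\beta_{p,X}$. Together with the scalar boundedness of the shift coefficients, this produces the bound $\beta_{p,X}^2$ on each model operator.

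The main obstacle is organizing the decomposition so that \emph{each} intermediate sum is genuinely an $X$-valued martingale difference sequence, so that UMD can be invoked, and so that the implicit constants depending on the complexity $m$ are polynomial in $m$, absorbed by the geometric factor $2^{-\delta m/2}$. A secondary point is averaging (or supremizing) over the choice of dyadic grid $\mathcal{D}$ at the end, to pass from a statement for one grid back to $T$ itself; since all constants above are independent of $\mathcal{D}$ this is routine once the model-operator estimates are in place.
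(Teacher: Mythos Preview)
The paper does not prove this theorem. It is stated as a known background result, attributed to Figiel, and serves only as motivation: the paper's contribution is to improve the quadratic dependence $\beta_{p,X}^2$ to linear under the additional hypotheses of an even kernel and smoothness $\delta>1/2$. There is therefore no ``paper's own proof'' to compare against.

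As for your sketch itself: the overall strategy is the correct one, and indeed follows Figiel's original route --- Haar matrix decomposition, organization by relative position (diagonal, nested, separated), and reduction to elementary martingale-type operators each costing a factor of $\beta_{p,X}$. But several points are too vague to count as a proof. First, the notation in your displayed shift is garbled: if $I\supset J$ then $J\in\mathcal{D}_m(I)$, not $I\in\mathcal{D}_m(J)$. Second, and more seriously, the claimed factorization ``input-side martingale transform, then output-side martingale transform'' is asserted rather than constructed; Figiel's actual argument uses specific elementary operators (often denoted $T_m$ and $U_m$) together with Rademacher randomization, and showing that the resulting constants in the complexity $m$ grow only polynomially (so as to be killed by $2^{-\delta m}$) is the genuine technical content. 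Your sentence ``the implicit constants depending on the complexity $m$ are polynomial in $m$'' is exactly the hard step, and you have not indicated how it is done. Third, for the separated intervals you need to handle the translation operators, which in Figiel's work are the source of the second factor of $\beta_{p,X}$; your ``translated Haar multiplier'' hints at this but does not explain why such an object is controlled by $\beta_{p,X}$ at all (it is not literally a martingale transform).
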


\subsection{Dyadic setting}

The standard dyadic system in \(\mathbb{R}\) is
\[\dd^0:=\bigcup_{j \in \mathbb{Z}} \dd^0_j, \qquad \dd^0_j:=\{2^{-j}([0,1)+k): k \in \mathbb{Z}\}.\]
Given a binary sequence \(\omega=(\omega_j)_{j \in \mathbb{Z}} \in (\{0,1\})^{\mathbb{Z}}\), a general dyadic system on \(\mathbb{R}\) is defined by
\[\dd^{\omega}:=\bigcup_{j \in \mathbb{Z}} \dd^{\omega}_j, \qquad \dd^{\omega}_j:= \dd^0_j + \sum_{i>j} 2^{-i} \omega_i.\]
When the particular choice of \(\omega\) is not important, we will use the notation \(\dd\) for a generic dyadic system. We equip the set \(\Omega:=(\{0,1\})^{\mathbb{Z}}\) with the canonical product probability measure \(\mathbb{P}_{\Omega}\) which makes the coordinates \(\omega_j\) independent and identically distributed with \(\mathbb{P}_{\Omega}(\omega_j=0)=\mathbb{P}_{\Omega}(\omega_j=1)=1/2\). We denote by \(\mathbb{E}_{\Omega}\) the expectation over the random variables \(\omega_j, j \in \mathbb{Z}\).

For an interval \(I \in \dd\), let \(I^+\) and \(I^-\) be the left and right children of \(I\). The parent of \(I\) will be denoted by \(\tilde{I}\). We will also use the notation \[\dd_n(I):=\{J \in \dd: J \subset I, |J|=2^{-n}|I|\}\]
for the collection of \(n\)-th generation children of \(I\), where \(|J|\) stands for the length of the interval \(J\).

To any interval \(I \in \dd\) there is an associated Haar function defined by
\[h_I=|I|^{-1/2}(\chi_{I^+}-\chi_{I^-}),\]
where \(\chi_I\) is the characteristic function of \(I\).

For an arbitrary dyadic system \(\dd\), the Haar functions form an (unconditional) orthogonal basis of \(L^p(\mathbb{R})\) for \(p \in (1,\infty)\). Hence any function \(f \in L^p(\mathbb{R})\) admits the orthogonal expansion
\[f=\sd \langle f,h_I \rangle h_I.\]
We denote the average of a locally integrable function \(f\) on the interval \(I\) by \(\langle f \rangle_I :=|I|^{-1}\int_I f(t)\, \mathrm{d}t\).

A (cancellative) dyadic Haar shift of parameters \((m,n)\), with \(m,n \in \mathbb{N}_0\), is an operator of the form
\[S f = \sum_{L \in \dd}
\sum_{\substack{
            I \in \mathcal{D}_m(L) \\
            J \in \mathcal{D}_n(L)}}
c^{L}_{I,J} \langle f, h_I \rangle h_J,\]
where \(\left |c^{L}_{I,J}\right| \leq \frac{\sqrt{|I|} \sqrt{|J|}}{|L|} =2^{-(m+n)/2} \) and \(f\) is any locally integrable function. The number \(\max\{m,n\}+1\) is called the complexity of the Haar shift.

One of the main results that we are using is the following representation of a Calder\'{o}n-Zygmund operator in terms of dyadic Haar shifts.

\begin{theorem} [\protect{Hyt\"{o}nen \cite{Hy11}}] \label{repr}
Let \(T\) be a Calder\'{o}n-Zygmund operator on \(\mathbb{R}\) which satisfies the standard kernel estimates, the weak boundedness property \(|\langle T \chi_I, \chi_I \rangle | \leq C|I|\) for all intervals \(I\), and $T(1), T^*(1) \in BMO(\mathbb{R})$. Then it has an expansion, say for \(f,g \in C^1_c(\mathbb{R})\),
\[\langle T f,g \rangle_{L^2(\mathbb{R}),L^2(\mathbb{R})} =
C \cdot \mathbb{E}_{\Omega} \sum_{m,n=0}^{\infty} \tau(m,n) \langle S^{mn}_{\omega} f, g \rangle_{L^2(\mathbb{R}),L^2(\mathbb{R})}
+ \mathbb{E}_{\Omega} \langle   \Pi^\omega_{T(1)} f, g \rangle    +  \mathbb{E}_{\Omega} \langle   (\Pi^\omega_{T^*(1)})^* f, g \rangle  ,\]
where \(C\) is a constant depending only on the constants in the standard estimates of the kernel \(K\) and the weak boundedness property, \(S^{mn}_{\omega}\) is a dyadic Haar shift of parameters \((m,n)\) on the dyadic system \(\mathcal{D}^{\omega}\) and \(\tau(m,n) \lesssim P(\max\{m,n\}) 2^{-\delta \max\{m,n\}} \), with \(P\) a polynomial.
\end{theorem}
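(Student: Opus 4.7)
The plan is to expand both $f$ and $g$ in the Haar basis of a random dyadic system $\mathcal{D}^\omega$ and organize the resulting double sum into the Haar shift and paraproduct pieces demanded by the statement. For fixed $\omega$ one writes
\[
\langle Tf, g\rangle = \sum_{I, J \in \mathcal{D}^\omega} \langle f, h_I\rangle \langle g, h_J\rangle \, a_{IJ}, \qquad a_{IJ} := \langle T h_I, h_J\rangle.
\]
If $I$ and $J$ share a minimal dyadic ancestor $L$ and are either well separated inside $L$ or one is strictly contained in the other, then combining the kernel estimates with the cancellation of $h_I$ or $h_J$ yields
\[
|a_{IJ}| \lesssim \frac{\sqrt{|I|\,|J|}}{|L|}\, 2^{-\delta \max(m,n)}, \qquad m := \log_2\frac{|L|}{|I|},\ n := \log_2\frac{|L|}{|J|}.
\]

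The key device is the Nazarov-Treil-Volberg good/bad decomposition. Given a parameter $r$, an interval $J$ is called \emph{bad} if it lies within distance $|J|^{\gamma}|L|^{1-\gamma}$ of the boundary of some strictly larger $L \in \mathcal{D}^\omega$, for a small $\gamma$ depending on $\delta$. The probability that $J$ is bad tends to $0$ as $r \to \infty$, and since this event depends only on the coordinates $\omega_j$ above the generation of $J$, it is independent of $\langle f, h_J\rangle$. A standard duality argument then shows that the part of the bilinear form carried by pairs with a bad inner interval is controlled by a small multiple of the full bilinear form, so after absorption we may assume that both $I$ and $J$ are good.

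Next I would bundle the surviving good pairs by their minimal common ancestor $L$ and the relative generations $m, n$, setting
\[
S^{mn}_\omega f := \sum_{L \in \mathcal{D}^\omega} \sum_{\substack{I \in \mathcal{D}_m(L)\\ J \in \mathcal{D}_n(L)}} c^L_{I,J}\, \langle f, h_I\rangle\, h_J,
\]
where $c^L_{I,J}$ is a renormalization of $a_{IJ}$ by the scalar $\tau(m,n)$. The kernel bound above produces exactly the decay $\tau(m,n) \lesssim P(\max\{m,n\})\, 2^{-\delta \max\{m,n\}}$; the polynomial factor $P$ emerges from the $r$-parameter, which must be allowed to grow with $\max\{m,n\}$ in order to keep the bad-pair absorption uniform. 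The degenerate contributions where $h_I$ and $h_J$ live over the same interval (so that no cancellation is available from either side) are exactly what the paraproducts $\Pi^\omega_{T(1)}$ and $(\Pi^\omega_{T^*(1)})^*$ are designed to absorb, via the Haar expansion of $T(1), T^*(1) \in BMO(\mathbb{R})$.

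The main obstacle is the bookkeeping in the good/bad reduction: one must verify that the bad-pair contribution is bounded by $\varepsilon$ times the same bilinear form (not merely by $\varepsilon \|f\|\,\|g\|$ with an unrelated constant), and that $r$ can be taken as a mildly increasing function of $\max\{m,n\}$ without destroying the geometric decay $2^{-\delta \max\{m,n\}}$. Once this is handled, taking the expectation in $\omega$ and isolating the diagonal paraproduct contributions reassembles the decomposition in the form asserted by the theorem.
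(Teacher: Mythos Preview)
The paper does not contain a proof of this statement. Theorem~\ref{repr} is quoted verbatim from Hyt\"{o}nen \cite{Hy11} and used as a black box; the authors make no attempt to reprove it, and the remainder of the paper is devoted to estimating the Haar shifts $S^{mn}_\omega$ that appear in its conclusion. There is therefore nothing in the paper to compare your proposal against.

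That said, your sketch is a recognisable outline of Hyt\"{o}nen's actual argument in \cite{Hy11}, with one point that is slightly off. In the original proof the parameter $r$ in the good/bad decomposition is \emph{fixed} once and for all (large enough that the probability of badness is below $1/2$); it does not grow with $\max\{m,n\}$. The polynomial factor $P$ does not arise from a variable $r$ but from the combinatorics of grouping good pairs $(I,J)$ according to their smallest common ancestor $L$: for pairs that are ``close'' (within $r$ generations) the weak boundedness property is used, while for ``far'' pairs the H\"older kernel estimate gives the geometric decay, and the bookkeeping of the transition zone produces the polynomial prefactor. Also, the reduction to good intervals is not by absorbing a small multiple of the bilinear form; rather, one uses that goodness of $I$ is independent of the Haar coefficient $\langle f,h_I\rangle$, so that $\mathbb{E}_\Omega\langle Tf,g\rangle$ equals a fixed constant times the expectation of the part restricted to good $I$. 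Apart from these corrections, the decomposition into shifts indexed by $(m,n)$ and the separation of the paraproduct terms is as you describe.
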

Here, $\Pi^\omega$ and $(\Pi^\omega)^*$ are dyadic paraproducts with respect to the grid $\dd^\omega$,
$$
         \Pi^\omega_\phi f = \sum_{I \in \dd^\omega}   h_I \langle \phi, h_I \rangle  \langle f \rangle_I,
$$
for $\phi \in BMO(\mathbb{R})$ and suitable $f$.
\subsection{Schur multipliers}

A key ingredient in the proof of our main result is the notion of Schur multiplier. Let \(\mathcal{M}_n(\mathbb{R})\) be the space of \(n \times n\) real matrices. If \(A=(a_{ij}) \in \mathcal{M}_n(\mathbb{R})\), the Schur multiplier is the bounded operator \(S_A:\mathcal{M}_n(\mathbb{R}) \to \mathcal{M}_n(\mathbb{R})\) that acts on a matrix \(M=(m_{ij})\) by Schur multiplication: \(S_A(M)=(a_{ij}m_{ij})\). The Schur multiplier norm is
\[\|A\|_m:=\sup_{M \in \mathcal{M}_n(\mathbb{R})} \frac{\|S_A(M)\|_{op}}{\|M\|_{op}},\]
where \(\|M\|_{op}\) is the operator norm of the matrix \(M\) on \(\ell^2(\{1,2,\ldots,n\})\). If \(A=(a_{ij}) \in \mathcal{M}_n(\mathbb{R})\) is of the form \(a_{ij}=s_i t_j\), then \(A\) is called a rank one Schur multiplier. It is easy to see that if \(A\) is a rank one Schur multiplier, then \(\|A\|_m \leq \|(s_i)_{i=1}^n\|_{\infty} \|(t_j)_{j=1}^n\|_{\infty} \). A classical result due to A. Grothendieck says that the converse is essentially true (up to a constant called Grothendieck constant).

\begin{theorem} [\protect{\cite[Theorem 1.2]{DaDo07}, \cite[Theorem 3.2]{Pi12}}] \label{grot}
The closure of the convex hull of the rank one Schur multipliers of norm one in the topology of pointwise convergence contains the ball of all Schur multipliers of norm at most \(K_G^{-1}\), where \(K_G\) is a universal constant.
\end{theorem}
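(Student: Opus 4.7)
The plan is to derive this statement from Grothendieck's inequality via a Hahn--Banach separation argument in the finite-dimensional space $\mathcal{M}_n(\mathbb{R})$. First I would recall the classical identification of the Schur multiplier norm: $\|A\|_m \leq 1$ if and only if there exist vectors $x_i, y_j$ in some Hilbert space $H$ with $\|x_i\|, \|y_j\| \leq 1$ such that $a_{ij} = \langle x_i, y_j \rangle$. In particular, a rank one matrix $(s_i t_j)$ has $\|(s_i t_j)\|_m \leq 1$ exactly when $|s_i|, |t_j| \leq 1$ (take $H = \mathbb{R}$), so the set of rank one Schur multipliers of norm at most one is well-defined and symmetric.

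Let $\mathcal{C} \subset \mathcal{M}_n(\mathbb{R})$ denote the closed convex hull of the rank one multipliers of norm at most one; since we are in a finite-dimensional space, pointwise convergence is the usual topology, and $\mathcal{C}$ is a closed, convex, balanced set. Suppose, for contradiction, that some $A$ with $\|A\|_m \leq K_G^{-1}$ lies outside $\mathcal{C}$. I would apply the Hahn--Banach separation theorem to produce a matrix $B = (b_{ij})$ satisfying
\[
\sum_{i,j} b_{ij} a_{ij} > \sup\Big\{\sum_{i,j} b_{ij} s_i t_j : |s_i|, |t_j| \leq 1\Big\} =: N_\infty(B).
\]

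To reach a contradiction I would use both sides of this inequality. Since $\|K_G A\|_m \leq 1$, I can write $K_G a_{ij} = \langle x_i, y_j \rangle$ with $\|x_i\|, \|y_j\| \leq 1$ in a Hilbert space. Applying Grothendieck's inequality to the bilinear form whose matrix is $B$ then yields
\[
\Big|\sum_{i,j} b_{ij} \langle x_i, y_j \rangle\Big| \leq K_G \, N_\infty(B),
\]
and dividing by $K_G$ gives $\big|\sum_{i,j} b_{ij} a_{ij}\big| \leq N_\infty(B)$, contradicting the strict separation. Hence every $A$ with $\|A\|_m \leq K_G^{-1}$ must belong to $\mathcal{C}$.

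The main obstacle is really the use of Grothendieck's inequality itself, which is the deep content; the surrounding argument is routine duality. A slightly more conceptual rephrasing replaces the contradiction with a bipolar computation: the polar of $\mathcal{C}$ in $\mathcal{M}_n(\mathbb{R})$ is exactly the unit ball of $N_\infty$, Grothendieck's inequality compares $N_\infty$ with the dual norm of $\|\cdot\|_m$ up to the factor $K_G$, and the bipolar theorem then gives the desired inclusion in one step.
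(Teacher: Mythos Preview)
The paper does not prove this theorem at all: it is quoted from the literature (Davidson--Donsig and Pisier) and used as a black box in the proof of Lemma~5.1. There is therefore no ``paper's own proof'' to compare against.

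Your argument is correct and is in fact the standard route to this statement. The only substantive input is Grothendieck's inequality; once you have the Hilbert-space factorization characterization of the unit ball of $\|\cdot\|_m$, the Hahn--Banach (or bipolar) step is routine duality in finite dimensions. One small cosmetic point: the statement speaks of rank one Schur multipliers \emph{of norm one}, while you take the closed convex hull of those \emph{of norm at most one}; but since $0$ lies in the closure and the set is balanced, the two closed convex hulls coincide, so no harm is done.
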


Here is our main result:

\begin{theorem}\label{mainthm}
Let \(X\) be a UMD space, \(1<p<\infty\), and \(\beta_{p,X}\) be the \(UMD_p\) constant of \(X\). Let \(K\) be an even standard kernel with smoothness \(\delta>1/2\) and \(T\) be a Calder\'{o}n-Zygmund operator on \(\mathbb{R}\) associated to \(K\). Suppose that \(T\) satisfies the weak boundedness property \(|\langle T \chi_I, \chi_I \rangle | \leq C|I|\) for all intervals \(I\), and the vanishing paraproduct conditions \(T(1)=T^*(1)=0\). Then
\[\|T\|_{\lx \to \lx} \leq C \beta_{p,X},\]
where \(C\) depends only on the constants in the standard estimates and the weak boundedness property.
\end{theorem}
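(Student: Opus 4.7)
The strategy is to reduce, via Hyt\"{o}nen's representation (Theorem~\ref{repr}), to estimating each individual dyadic Haar shift, and then to prove a linear-in-$\beta_{p,X}$ bound on the shift whose complexity dependence is absorbed by the decay of $\tau(m,n)$.

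Since $T(1) = T^*(1) = 0$, the paraproduct terms in Theorem~\ref{repr} vanish, so for scalar test functions $f, g \in C^1_c(\mathbb{R})$,
\[
\langle Tf, g\rangle = C \cdot \mathbb{E}_\Omega \sum_{m,n=0}^\infty \tau(m,n) \langle S^{mn}_\omega f, g \rangle_{L^2,L^2}.
\]
Tensorising with $X$-valued inputs and dualising against $\lxs$ (using that $X$ is reflexive as a UMD space), Theorem~\ref{mainthm} reduces to the bound
\[
\sum_{m,n \geq 0} |\tau(m,n)| \, \sup_\omega \|S^{mn}_\omega\|_{\lx \to \lx} \lesssim \beta_{p,X}.
\]
The crux is to prove, as the forthcoming Theorem~\ref{mainshift}, a Haar shift estimate of the form $\|S^{mn}_\omega\|_{\lx \to \lx} \leq C \, Q(k) \, 2^{k/2} \, \beta_{p,X}$ with $k := \max(m,n)$ and some polynomial $Q$. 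Combined with $\tau(m,n) \lesssim P(k) 2^{-\delta k}$ and the fact that at most $O(k)$ pairs have $\max(m,n) = k$, the total sum reduces to $\sum_k k P(k) Q(k) 2^{(1/2 - \delta)k}$, which converges precisely because of the hypothesis $\delta > 1/2$.

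To prove the Haar shift estimate I would follow the roadmap sketched in the introduction. First, apply the UMD property exactly once: linearise $\|S^{mn}_\omega f\|_{\lx}$ by duality into a bilinear form, then insert a random sign pattern on the Haar coefficients so that a single application of the UMD inequality produces the factor $\beta_{p,X}$ and reduces the problem to estimating an unsigned expression in the Haar data. Second, view the coefficient array $(c^L_{I,J})_{I \in \dd_m(L), J \in \dd_n(L)}$ as a Schur multiplier on a $2^k \times 2^k$ matrix space; the evenness of $K$ is what forces the symmetry ($T = T^*$, so $S^{mn}$ is related to $S^{nm}$ by adjunction) needed to apply Theorem~\ref{grot}, thereby reducing to shifts whose coefficient matrices are rank-one Schur multipliers. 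Third, decompose each such rank-one shift into $k$ ``slices'' indexed by the depth inside $L$; each slice is essentially a martingale transform that can be controlled by a Bellman function via the Treil-inspired construction sketched in the introduction, replacing the standard binary tree with a modified one that first jumps to two intermediate ``almost-average'' points and then continues via a slightly perturbed dyadic martingale. Summing the $k$ slices produces the extra $2^{k/2}$ factor (up to a polynomial in $k$) that is finally absorbed by the smoothness $\delta > 1/2$.

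The principal obstacle is constructing and verifying the vector-valued Bellman function with the precise size, convexity, and boundary properties required, together with the geometry of the modified martingale: the intermediate points must simultaneously be far enough from the starting point to extract a useful gain, close enough to the average of the endpoints to preserve the endpoint distribution, and compatible with the domain of the Bellman function. A secondary difficulty is the careful book-keeping of complexity dependence through the slice decomposition, to make sure the loss accumulates to at most $Q(k) 2^{k/2} \beta_{p,X}$ per shift, so that the $2^{-\delta k}$ decay of $\tau$ genuinely wins when $\delta > 1/2$.
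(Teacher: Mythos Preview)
Your reduction via Hyt\"{o}nen's representation is correct and matches the paper. The problem is the roadmap for the Haar shift bound: you have all the right ingredients, but the places where they are applied are essentially permuted, and as written the steps do not go through.

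First, the UMD property cannot be used the way you describe. Inserting random signs on the Haar coefficients of a shift with $I\neq J$ does not collapse to a single martingale transform; the bilinear form involves off-diagonal pairings $\langle\langle f,h_I\rangle,\langle g,h_J\rangle\rangle_{X,X^*}$, and UMD gives no direct control on $\sum|\langle\langle f,h_I\rangle,\langle g,h_J\rangle\rangle|$. In the paper, the single use of UMD is the diagonal estimate $\sum_I|\langle\langle f,h_I\rangle,\langle g,h_I\rangle\rangle|\le\beta_{p,X}\|f\|\|g\|$, and this enters only as the \emph{range bound} of the Bellman function, not as an explicit sign insertion on the shift.

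Second, Grothendieck's theorem is not applied to the coefficient array $(c^L_{I,J})$ to reduce to rank-one shifts. The order is: (i) decompose the (self-adjoint) shift into $k$ slices $S_j$; (ii) use self-adjointness of each slice to symmetrise the bilinear form, yielding for each top interval $L$ a \emph{symmetric} $2^k\times 2^k$ data matrix $\Lambda=(\lambda_{KL})$ built from the averages of $f$ and $g$ on $\mathcal D_k(L)$; (iii) inside the Bellman lemma one needs a \emph{single} sequence $(\alpha_I)$ with $\big|\sum\alpha_K\alpha_L\lambda_{KL}\big|\gtrsim 2^{-k/2}\sum|\lambda_{KL}|$. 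It is here that the symmetry of $\Lambda$ (coming ultimately from the evenness of the kernel) lets one pass from the quadratic form to a bilinear form, and then Grothendieck together with the $2^{k/2}$ bound on the Schur multiplier norm of $\pm1$ matrices furnishes the sequence. So evenness, Grothendieck, and the $2^{k/2}$ loss all live inside the Bellman argument applied to the data matrix, not on the coefficient matrix, and the slice decomposition precedes all of this.
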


\begin{corollary} If the vanishing paraproduct condition is replaced by $T(1)=T^*(1) \in L^\infty(\mathbb{R})$, then the linear bound also holds.
\end{corollary}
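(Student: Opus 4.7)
The plan is to apply Hyt\"{o}nen's representation theorem (Theorem \ref{repr}) to $T$ and exploit the equality $T(1)=T^*(1)$: the surviving paraproducts assemble into a single symmetric combination that admits a direct linear $\beta_{p,X}$-bound via a dyadic Leibniz identity. Writing $\phi := T(1) = T^*(1) \in L^\infty(\mathbb{R}) \subset BMO(\mathbb{R})$, Theorem \ref{repr} yields for $f,g \in C^1_c(\mathbb{R})$,
$$
\langle Tf, g\rangle = C \cdot \mathbb{E}_\Omega \sum_{m,n=0}^\infty \tau(m,n) \langle S^{mn}_\omega f, g\rangle + \mathbb{E}_\Omega \langle (\Pi^\omega_\phi + (\Pi^\omega_\phi)^*) f, g\rangle.
$$
The shift sum depends only on the kernel $K$ (not on $T(1)$) and is handled exactly as in the proof of Theorem \ref{mainthm}: the Haar-shift estimate of Theorem \ref{mainshift}, together with the decay $\tau(m,n) \lesssim P(\max\{m,n\}) 2^{-\delta \max\{m,n\}}$ for $\delta > 1/2$, delivers a bound of $C \beta_{p,X}$.

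For the paraproduct contribution I would verify the dyadic Leibniz identity on $\dd^\omega$,
$$
\phi f = \Pi^\omega_\phi f + \pi^\omega_\phi f + (\Pi^\omega_\phi)^* f, \qquad \pi^\omega_\phi f := \sum_{I \in \dd^\omega} \langle \phi\rangle_I \langle f, h_I\rangle h_I,
$$
by expanding both factors in the Haar basis and splitting the double sum $\sum_{I,J}\langle \phi, h_I\rangle \langle f, h_J\rangle h_I h_J$ into the three cases $I \subsetneq J$ (producing $\Pi^\omega_\phi f$, via $\sum_{J \supsetneq I} \langle f, h_J\rangle h_J|_I = \langle f\rangle_I$), $I = J$ (producing $(\Pi^\omega_\phi)^* f$, via $h_I^2 = \chi_I/|I|$), and $I \supsetneq J$ (producing $\pi^\omega_\phi f$, via $\sum_{I \supsetneq J} \langle \phi, h_I\rangle h_I|_J = \langle \phi\rangle_J$). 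Rearranging,
$$
\Pi^\omega_\phi + (\Pi^\omega_\phi)^* = M_\phi - \pi^\omega_\phi,
$$
where $M_\phi$ denotes pointwise multiplication, with $\|M_\phi\|_{\lx \to \lx} \leq \|\phi\|_\infty$. The operator $\pi^\omega_\phi$ is a Haar multiplier on the filtration generated by $\dd^\omega$ with scalar multipliers of modulus at most $\|\phi\|_\infty$, so the UMD property of $X$ gives $\|\pi^\omega_\phi\|_{\lx \to \lx} \leq \|\phi\|_\infty \beta_{p,X}$. Combining, one obtains $\|\Pi^\omega_\phi + (\Pi^\omega_\phi)^*\|_{\lx \to \lx} \leq 2 \|\phi\|_\infty \beta_{p,X}$ uniformly in $\omega$; averaging over $\Omega$ preserves this bound, and adding the shift estimate concludes the proof.

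The main obstacle is conceptual rather than computational: the Leibniz identity only controls the symmetric combination $\Pi^\omega_\phi + (\Pi^\omega_\phi)^*$, not each paraproduct separately. The equality $T(1) = T^*(1)$ in the hypothesis is precisely what allows both paraproduct contributions in Hyt\"{o}nen's representation to be packaged into that single symmetric sum. A more general condition of the form $T(1), T^*(1) \in L^\infty$ with $T(1) \neq T^*(1)$ would seem to require an independent linear $\beta_{p,X}$-bound for $\Pi^\omega_\phi$ alone, for which an elementary argument along these lines does not suffice and a separate Bellman-function analysis would likely be needed.
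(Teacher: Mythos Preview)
Your proof is correct and follows essentially the same route as the paper: the paper also writes $\phi=T(1)=T^*(1)$ and uses the identity $(\Pi^\omega_\phi+(\Pi^\omega_\phi)^*)f=\phi f-\sum_{I\in\dd^\omega}\langle\phi\rangle_I\langle f,h_I\rangle h_I$, bounding the multiplication term by $\|\phi\|_\infty\|f\|_{\lx}$ and the Haar-multiplier term by $\beta_{p,X}\|\phi\|_\infty\|f\|_{\lx}$, then invoking Theorem~\ref{mainthm} for the remaining shift part. Your presentation spells out the dyadic Leibniz decomposition that underlies this identity, but the argument is the same.
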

\proof of the Corollary. If $\phi =T(1)=T^*(1) \in L^\infty(\mathbb{R})$ and $f \in C^1_c(\mathbb{R}) \otimes X$, then
$$
     \big(\Pi^\omega_{T(1)} + (\Pi^\omega_{T^*(1)})^* \big) f =  \big(\Pi^\omega_{\phi} + (\Pi^\omega_{\phi})^*\big) f =
         \phi f - \sum_{I \in \dd^\omega}   h_I \langle f, h_I \rangle \langle \phi \rangle_I.
$$
The norm of the first term is bounded by $\| \phi\|_\infty \|f \|_{L^p(\mathbb{R}, X)}$, and the norm of the second term by $  \beta_{p,X}\| \phi\|_\infty \|f \|_{L^p(\mathbb{R}, X)}$.
The linear bound for the remaining part of the operator follows from Theorem \ref{mainthm}.
\qed

In order to prove Theorem \ref{mainthm}, it is enough to show a corresponding result for Haar shift operators and then use the representation theorem of T. Hyt\"{o}nen for the case $T(1) = T^*(1) =0$.

A standard argument shows that we may assume that \(X\) is a real Banach space and the kernel \(K\) is real-valued. Since \(T\) is associated to a real-valued even kernel \(K\), \(T^*\), and hence also \({\rm{Re}}\, T =\frac{1}{2}(T+T^*)\), are associated to \(K\). This means that \(T={\rm{Re}}\, T + M_b\), where \(M_b\) is a pointwise multiplication operator with \(b \in L^{\infty}(\mathbb{R})\). But \(\|M_b\|_{\lx \to \lx}=\|b\|_{L^{\infty}(\mathbb{R})} \leq \|b\|_{L^{\infty}(\mathbb{R})} \beta_{p,X} ,\) so by replacing \(T\) with \({\rm{Re}}\, T\), we may assume that \(T\) is a self-adjoint operator.

Let \(f \in \lx\) and \(g \in \lxs\) be fixed. Since \(C_c^1(\mathbb{R}) \otimes X\) and \(C_c^1(\mathbb{R}) \otimes X^*\) are dense in \(\lx\) and \(\lxs\), respectively, we may assume that \(f=\sum_{i=1}^{M} f_i \otimes x_i\) and \(g=\sum_{j=1}^{N} g_j \otimes y_j\), where \(f_1,\ldots,f_M,g_1,\ldots,g_N \in C_c^1(\mathbb{R}),\, x_1,\ldots,x_M \in X\), and \(y_1,\ldots,y_N \in X^*\). We have
\begin{align*}
\langle Tf, g \rangle_{\lx,\lxs}&  = \sum_{i=1}^{M} \sum_{j=1}^N \langle x_i,y_j \rangle_{X,X^*} \langle T f_i,g_j \rangle_{L^p(\mathbb{R}),L^{p'}(\mathbb{R})}\\
& = \sum_{i=1}^{M} \sum_{j=1}^N \langle x_i,y_j \rangle_{X,X^*} \langle T f_i,g_j \rangle_{L^2(\mathbb{R}),L^2(\mathbb{R})}.
\end{align*}
Using Theorem \ref{repr} (with the same notations), the last inner product can be written as
\[\langle T f_i,g_j \rangle_{L^2(\mathbb{R}),L^2(\mathbb{R})} = C \cdot \mathbb{E}_{\Omega} \sum_{m,n=0}^{\infty} \tau(m,n) \langle S^{mn}_{\omega} f_i, g_j \rangle_{L^2(\mathbb{R}),L^2(\mathbb{R})}.\]

Hence we have the representation
\[\langle Tf, g \rangle_{\lx,\lxs} = C \cdot \mathbb{E}_{\Omega} \sum_{m,n=0}^{\infty} \tau(m,n) \langle S^{mn}_{\omega} f, g \rangle_{\lx,\lxs},\]
and therefore,
\[ \|T\|_{\lx \to \lx} \leq C  \sum_{m,n=0}^{\infty} \tau(m,n) \|S^{mn}_{\omega}\|_{\lx \to \lx}.\]
We will show the estimate \(\|S^{mn}\|_{\lx \to \lx} \lesssim (\max\{m,n\}+1) 2^{\max\{m,n\}/2} \beta_{p,X}\) for all dyadic Haar shifts \(S^{mn}\) of parameters \((m,n)\), which ensures the convergence of the series (since \(\delta>1/2\)) and completes the proof of Theorem \ref{mainthm}. Recall that \(T\) was assumed to be self-adjoint, so we may restrict ourselves to the case of self-adjoint dyadic Haar shift of parameters \((m,n)\). This is the content of the following theorem.

\begin{theorem}\label{mainshift}
Let \(X\) be a UMD space, \(1<p<\infty\), and \(\beta_{p,X}\) be the \(UMD_p\) constant of \(X\). Let \(S\) be a self-adjoint dyadic Haar shift of complexity \(k \geq 1\). Then
\[\|S\|_{\lx \to \lx} \leq c \cdot k 2^{k/2} \beta_{p,X},\]
where c is an absolute, positive constant.
\end{theorem}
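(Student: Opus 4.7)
The plan follows the four-step strategy sketched in the introduction: a slice decomposition of $S$, a single application of the UMD property, a Schur-multiplier linearization via Grothendieck, and a Bellman-function estimate. By duality and density, it is enough to bound $|\langle Sf,g\rangle_{\lx,\lxs}|$ for $f \in C^1_c(\mathbb{R})\otimes X$ and $g \in C^1_c(\mathbb{R})\otimes X^*$.

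First I would split the shift as $S = \sum_{r=0}^{k-1} S_r$, grouping the pairs $(I,J)$ with $I\in\dd_m(L)$, $J\in\dd_n(L)$ according to a common ancestor of depth $r$ inside $L$. Using $h_I = |I|^{-1/2}(\chi_{I^+}-\chi_{I^-})$ to rewrite each Haar function as a difference of dyadic averages, each slice $S_r$ takes the form of a martingale-transform-type sum at a single pair of scales; the coefficient bound $|c^L_{I,J}|\leq 2^{-(m+n)/2}$ combined with Cauchy--Schwarz over the $2^r$ children per slice produces a prefactor of order $2^{k/2}$, so the $k$ slices together account for the $k\cdot 2^{k/2}$ in the statement. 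On each $S_r$ I would then apply the UMD inequality exactly once to one of the two sides (say $f$), yielding a factor $\beta_{p,X}$ and reducing matters to a scalar-valued bilinear form with a matrix of Schur-multiplier coefficients supported on each $L$.

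The next step is Grothendieck linearization: the matrix $(c^L_{I,J})$ is a bounded Schur multiplier, so by Theorem~\ref{grot} I may, up to a factor of the Grothendieck constant $K_G$, replace it by a convex combination of rank-one Schur multipliers $s^L_I\,t^L_J$ with $\|s\|_\infty,\|t\|_\infty\leq 1$. Within each $L$ the bilinear form then factorizes as $\bigl(\sum_I s^L_I \langle f,h_I\rangle\bigr)\bigl(\sum_J t^L_J\langle g,h_J\rangle\bigr)$, precisely the rank-one expression that the Bellman function constructed in Section~4 is tailored to control. The final Bellman estimate is run on the modified two-step martingale described in the introduction: from each node the martingale first moves with equal probability to two intermediate points lying far from the current state yet averaging to (nearly) the mean of the ultimate children, and from there proceeds with slightly unequal probability to the standard dyadic children, in such a way that the root-to-leaf transition probabilities of the usual dyadic martingale are preserved.

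The main obstacle is the Bellman-function step itself, and more precisely the design of the detour martingale so that the concavity inequality yields exactly one factor of $\beta_{p,X}$ per slice, without introducing any hidden dependence on the complexity $k$. Everything else---the slicing, the single UMD step, and the Grothendieck reduction---is relatively routine once that Bellman estimate is in hand, and assembling the pieces across the $k$ slices produces the asserted bound $\|S\|_{\lx\to\lx}\leq c\cdot k\cdot 2^{k/2}\beta_{p,X}$.
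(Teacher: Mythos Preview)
Your outline has the right list of ingredients but assembles them in the wrong order, and one step as described cannot work.

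The central gap is the sentence ``apply the UMD inequality exactly once to one of the two sides (say $f$), yielding a factor $\beta_{p,X}$ and reducing matters to a scalar-valued bilinear form''. Applying UMD to the $f$-side controls $\bigl\|\sum_I \sigma_I \langle f,h_I\rangle h_I\bigr\|_{\lx}$, but the pairing $\langle Sf,g\rangle=\sum c^L_{I,J}\,\langle\langle f,h_I\rangle,\langle g,h_J\rangle\rangle_{X,X^*}$ is genuinely off-diagonal in $(I,J)$ and still vector-valued after any such move; nothing scalar-valued emerges without a second application of UMD on the $g$-side, which is exactly the route to the quadratic bound you are trying to avoid. In the paper the UMD constant is \emph{not} spent as a separate step at all: it enters only through the range bound $0\le\fb\le 4\beta_{p,X}\Fd^{1/p}\Gd^{1/p'}$ of the Bellman function, and everything else is pure concavity. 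This is what guarantees a single factor of $\beta_{p,X}$.

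Two smaller points also need correction. First, the slicing in the paper is by the residue class of $\log_2|L|$ modulo $k$ (the families $\mathcal L_j$), so that within one slice the top intervals $L$ are $k$ generations apart and the Bellman telescoping $\fb(A_L)-2^{-k}\sum_{I\in\dd_k(L)}\fb(A_I)$ collapses; it is not a grouping by common ancestors of $I$ and $J$. Second, Grothendieck's theorem is not used to decompose the coefficient matrix $(c^L_{I,J})$ into rank-one pieces. Self-adjointness is used first to pass to the symmetric quantities $\lambda_{KL}$, and Grothendieck is then applied \emph{inside} the main lemma to the matrix $\Lambda=(\lambda_{KL})$ in order to produce a single sequence $(\alpha_I)$ with $\|\alpha\|_\infty\le 1/4$, $\sum\alpha_I=0$, and $\bigl|\sum\alpha_K\alpha_L\lambda_{KL}\bigr|\gtrsim 2^{-k/2}\sum|\lambda_{KL}|$; the factor $2^{k/2}$ comes from the Schur-multiplier norm bound $\|M\|_m\le 2^{k/2}$ for $\pm1$ matrices, not from Cauchy--Schwarz over children. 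That sequence $\alpha$ is precisely what defines the two detour points $A_{I_0}^\pm=2^{-k}\sum(1\pm\alpha_I)A_I$ in the modified martingale you describe, tying the Grothendieck step and the Bellman step together rather than running them sequentially.
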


\section{Reduction of the proof of Theorem \ref{mainshift}}

For a sequence \(\sigma=\{\sigma_I\}_{I \in \dd},\ \sigma_I= \pm 1,\) we define the martingale transform operator \(T_{\sigma}\) by
\[T_\sigma f = \sd \sigma_I  \langle f,h_I \rangle h_I.\]
The martingale transform is considered a good model for Calder\'{o}n-Zygmund singular integral operators. By the definition of UMD spaces we obtain the uniform boundedness (in \(\sigma\)) of these operators. More precisely, we have
\begin{align*}
\sup_{\sigma} \|T_{\sigma} f \|_{\lx} & = \sup_{\sigma}
\sup_{\substack{
            g \in \lxs \\
            \|g\|_{\lxs}=1}}
\langle T_\sigma f,g \rangle_{\lx,\lxs} \\
& =
 \sup_{\sigma}
\sup_{\substack{
            g \in \lxs \\
            \|g\|_{\lxs}=1}}
\sd \sigma_I \big \langle \langle f,h_I \rangle, \langle g,h_I \rangle \big \rangle_{X,X^*} \\
& =
\sup_{\substack{
            g \in \lxs \\
            \|g\|_{\lxs}=1}}
\sd \Big | \big \langle \langle f,h_I \rangle, \langle g,h_I \rangle \big \rangle_{X,X^*} \Big | \leq \beta_{p,X} \|f\|_{\lx}.
\end{align*}
We can rewrite the estimate of the norm as
\begin{align}\label{umd}
\sd \left | \big \langle \langle f \rangle _{I^+} - \langle f \rangle_{I^-}, \langle g \rangle _{I^+} - \langle g\rangle_{I^-}  \big \rangle_{X,X^{*}} \right | \cdot |I| & = 4\sd \left | \big \langle \langle f,h_I \rangle, \langle g,h_I \rangle \big \rangle_{X,X^{*}} \right |  \nonumber \\
 & \leq 4 \beta_{p,X} \|f\|_{\lx} \|g\|_{\lxs},
\end{align}
for all \(f \in \lx\) and \( g \in \lxs.\)

Since \(S\) is a Haar shift operator of complexity \(k\), it has the form
\[S f = \sum_{L \in \dd}
\sum_{\substack{
            I \in \mathcal{D}_m(L) \\
            J \in \mathcal{D}_n(L)}}
c^{L}_{I,J} \langle f, h_I \rangle h_J,\]
where \(\left |c^{L}_{I,J}\right| \leq \frac{\sqrt{|I|} \sqrt{|J|}}{|L|} =2^{-(m+n)/2} \).

For \(0 \leq j \leq k-1\) we introduce the notation \(\mathcal{L} _j :=  \{ I \in \mathcal{D} : |I|=2^{j+kt},  t \in \mathbb{Z} \},\) and define the slice \(S_j\) by \[S_j f = \sum_{L \in \mathcal{L}_j}
\sum_{\substack{
            I \in \mathcal{D}_m(L) \\
            J \in \mathcal{D}_n(L)}}
c^{L}_{I,J} \langle f, h_I \rangle h_J, \]
where \(S_j\) also acts on locally integrable functions. We can thus decompose \(S\) as \(S=\sum _{j=0} ^{k-1} S_j\); the operators \(S_j\) can be seen as martingale transforms when we are moving \(k\) units of time at once, so it is possible to apply the Bellman function method. Notice that these slices are also self-adjoint operators.

Let \(f \in \lx,\  g \in \lxs\) and \(0 \leq j \leq k-1\) be fixed. We can write
\begin{align*}
& \left \langle S_j f,g \right \rangle_{\lx,\lxs}  = \Bigg \langle \sum_{L \in \mathcal{L}_j}
\sum_{\substack{
            I \in \mathcal{D}_m(L) \\
            J \in \mathcal{D}_n(L)}}
c^{L}_{I,J} \langle f, h_I \rangle h_J, \sum_{I' \in \dd} \langle g, h_{I'} \rangle h_{I'} \Bigg \rangle _{\lx,\lxs}\\
& \qquad = \sum_{L \in  \mathcal{L}_j} \sum_{I' \in \dd}
\sum_{\substack{
            I \in \mathcal{D}_m(L) \\
            J \in \mathcal{D}_n(L)}}
c^{L}_{I,J} \big \langle \langle f, h_I \rangle , \langle g, h_{I'} \rangle \big \rangle _{X,X^*} \langle h_J,h_{I'} \rangle_{L^p(\mathbb{R}),L^{p'}(\mathbb{R})} \\
& \qquad = \sum_{L \in  \mathcal{L}_j}
\sum_{\substack{
            I \in \mathcal{D}_m(L) \\
            J \in \mathcal{D}_n(L)}}
c^{L}_{I,J} \big \langle \langle f, h_I \rangle , \langle g, h_J \rangle \big \rangle _{X,X^*}\\
& \qquad = \sum_{L \in  \mathcal{L}_j}
\sum_{\substack{
            I \in \mathcal{D}_m(L) \\
            J \in \mathcal{D}_n(L)}}
c^{L}_{I,J} \frac{|I|^{1/2}}{2^{k-m}} \frac{|J|^{1/2}}{2^{k-n}} \Bigg \langle
\sum_{\substack{
            P \in \mathcal{D}_k(L) \\
            P \subset I^{+}}}
\big ( \langle f \rangle _P -  \langle f \rangle _L \big ) +
\sum_{\substack{
            P \in \mathcal{D}_k(L) \\
            P \subset I^{-}}}
\big ( \langle f \rangle _L -  \langle f \rangle _P \big ) , \Bigg. \\
& \hspace{5 cm} \Bigg.
\sum_{\substack{
            Q \in \mathcal{D}_k(L) \\
            Q \subset J^{+}}}
\big( \langle g \rangle _Q -  \langle g \rangle _L \big ) +
\sum_{\substack{
            Q \in \mathcal{D}_k(L) \\
            Q \subset J^{-}}}
\big ( \langle g \rangle _L -  \langle g \rangle _Q \big )
\Bigg \rangle_{X,X^*}.
\end{align*}

Since the slice \(S_j\) is self-adjoint, we have
\begin{align*}
2 \left \langle S_j f,g \right \rangle_{\lx,\lxs} & = \left \langle S_j f,g \right \rangle_{\lx,\lxs} + \left \langle f,S_j g \right \rangle_{\lx,\lxs}  \\
& = \sum_{L \in  \mathcal{L}_j}
\sum_{\substack{
            I \in \mathcal{D}_m(L) \\
            J \in \mathcal{D}_n(L)}}
c^{L}_{I,J} \Big (\big \langle \langle f, h_I \rangle , \langle g, h_J \rangle \big \rangle _{X,X^*} +  \big \langle \langle f, h_J \rangle , \langle g, h_I \rangle \big \rangle _{X,X^*} \Big ),
\end{align*}
therefore
\begin{align*}
& 2 \Big | \left \langle S_j f,g \right \rangle_{\lx,\lxs} \Big| = \Big | \left \langle S_j f,g \right \rangle_{\lx,\lxs} + \left \langle f,S_j g \right \rangle_{\lx,\lxs} \Big |  \\
& \leq \sum_{L \in  \mathcal{L}_j} |L|
\sum_{P,Q \in \mathcal{D}_k(L)} \bigg | \bigg \langle  \frac{\langle f \rangle _P -  \langle f \rangle _L}{2^k}, \frac{\langle g \rangle _Q -  \langle g \rangle _L}{2^k}    \bigg \rangle_{X,X^*}  +  \bigg \langle  \frac{\langle f \rangle _Q -  \langle f \rangle _L}{2^k}, \frac{\langle g \rangle _P -  \langle g \rangle _L}{2^k}    \bigg \rangle_{X,X^*}   \bigg|.
\end{align*}

\section{The Bellman function}

We are now going to define the Bellman function associated to our problem. Fix a dyadic interval \(I_0\) and for \(\fd \in X, \Fd \in \mathbb{R}, \gd \in X^*, \Gd \in \mathbb{R}\) satisfying
\begin{equation}\label{domain}
\|\fd\|_X^p \leq \Fd,\ \|\gd\|^{p'}_{X^*} \leq \Gd,
\end{equation}
define the function \(\fb=\fb^{I_0}: X \times \mathbb{R} \times X^* \times \mathbb{R} \to \mathbb{R},\) by
\[\fb(\fd, \Fd, \gd, \Gd):= |I_0|^{-1} \sup \sum_{I \subseteq I_0} \left | \big \langle \langle f \rangle _{I^+} - \langle f \rangle_{I^-}, \langle g \rangle _{I^+} - \langle g\rangle_{I^-}  \big \rangle_{X,X^*} \right | \cdot |I|, \]
where the supremum is taken over all strongly measurable functions \(f:\mathbb{R} \to X\) and \(g:\mathbb{R} \to X^*\) such that
\begin{equation}\label{supdomain}
\langle f \rangle _{I_0}=\fd \in X,\quad \big \langle \|f\|^p_X \big \rangle_{I_0} = \Fd \in \mathbb{R}, \quad \langle g \rangle _{I_0}=\gd \in X^*,\quad \big \langle \|g\|^{p'}_{X^*} \big \rangle_{I_0} = \Gd \in \mathbb{R}.
\end{equation}

The Bellman function \(\fb\) has the following properties:
\begin{enumerate}[(i)]
    \item (Domain) The domain \(\mathfrak{D}:=\mathrm{Dom}\, \fb\) is given by \eqref{domain}. This means that for every quadruple \((\fd, \Fd, \gd, \Gd)\) that satisfies \eqref{domain}, there exist functions \(f\) and \(g\) such that \eqref{supdomain} holds, so the supremum is not \(-\infty\). Conversely, if the variables \(\fd, \Fd, \gd, \Gd\) are the corresponding averages of some functions \(f\) and \(g\), then they must satisfy condition \eqref{domain}. It is important to notice that \(\mathfrak{D}\) is a convex set.
    \item (Range) \(0 \leq \fb(\fd, \Fd, \gd, \Gd) \leq 4 \beta_{p,X} \Fd^{1/p} \Gd^{1/{p'}}\) for all \((\fd, \Fd, \gd, \Gd) \in \mathfrak{D}.\)
    \item (Concavity condition) Consider all quadruples \(A=(\fd, \Fd, \gd, \Gd), A_+=(\fd_+, \Fd_+, \gd_+, \Gd_+)\) and \(A_-=(\fd_-, \Fd_-, \gd_-, \Gd_-)\) in \(\mathfrak{D}\) such that \(A=(A_+ + A_-)/2\). For all such quadruples, we have the following concavity condition:
        \[\fb(A) \geq \frac{\fb(A_+)+\fb(A_-)}{2} + \big | \left \langle \fd_+ - \fd_- , \gd_+ - \gd_- \right \rangle_{X,X^*} \big |.\]
  \end{enumerate}
\par
Let us now explain these properties of the function \(\fb\). The inequalities \(\|\fd\|_X^p \leq \Fd\) and \( \|\gd\|^{p'}_{X^*} \leq \Gd\) follow easily from a property of Bochner integrals and H\"{o}lder's Inequality. On the other hand, given a quadruple \((\fd, \Fd, \gd, \Gd) \in \mathfrak{D}\), we set \(f=\fd \varphi\) and \(g=\gd \psi\), where \(\langle \varphi \rangle _{I_0} = \langle \psi \rangle _{I_0} = 1\), \(\big \langle |\varphi|^p \|\fd\|_X^p \big \rangle_{I_0}=\Fd\),  \(\big \langle |\psi|^{p'} \|\gd\|_{X^*}^{p'} \big \rangle_{I_0}=\Gd\), so \(f\) and \(g\) satisfy \eqref{supdomain}.

Property (ii) follows from the definition of \(\fb\) and the inequality \eqref{umd}.

To prove the concavity condition, we consider three quadruples \(A, A_+, A_- \in \mathfrak{D}\) such that \(A=(A_+ + A_-)/2\) and choose two functions \(f\) and \(g\) so that
\begin{equation}\label{avI0+-}
A_{\pm}= \Big ( \langle f \rangle _{I_0^{\pm}},\ \big \langle \|f\|_X^p \big \rangle_{I_0^{\pm}},\ \langle g \rangle _{I_0^{\pm}},\ \big \langle \|g\|_{X^*}^{p'} \big \rangle_{I_0^{\pm}} \Big ).
\end{equation}
Then
\[A=\frac{A_+ + A_-}{2} = \Big ( \langle f \rangle _{I_0},\ \big \langle \|f\|_X^p \big \rangle_{I_0},\ \langle g \rangle _{I_0},\ \big \langle \|g\|_{X^*}^{p'} \big \rangle_{I_0} \Big )\]
is the vector of corresponding averages over \(I_0\). The expression in the definition of \(\fb(\fd, \Fd, \gd, \Gd)\), before taking the supremum, can be split into the average of the corresponding expressions for \(\fb(\fd_+, \Fd_+, \gd_+, \Gd_+)\) and \(\fb(\fd_-, \Fd_-, \gd_-, \Gd_-)\), plus the term \( \big | \big \langle \langle f \rangle _{I_0^+} - \langle f \rangle _{I_0^-}, \langle g \rangle _{I_0^+} - \langle g \rangle _{I_0^-} \big \rangle_{X,X^*} \big |\). Taking now the supremum over all functions \(f\) and \(g\) that satisfy conditions \eqref{avI0+-} we conclude that
\[\frac{\fb(A_+)+\fb(A_-)}{2} + \big | \left \langle \fd_+ - \fd_- , \gd_+ - \gd_- \right \rangle_{X,X^*} \big | \leq \fb(A).\]

This inequality is true because the set of functions over which we are taking the supremum is smaller than the one corresponding to \(\fb(A)\), since we are excluding all those functions \(f\) and \(g\) whose averages on the children of \(I_0\) are not the prescribed values in \eqref{avI0+-}.
\begin{remark}
The concavity condition (iii) implies that the function \(\fb\) is midpoint concave, that is, \(\fb \big(\frac{A_+ + A_-}{2}\big) \geq \frac{1}{2} \big(\fb(A_+) + \fb(A_-) \big)\), for all \(A_+, A_- \in \mathfrak{D}\). It is well-known that locally bounded below midpoint concave functions are actually concave (see \cite{RoVa73}, Theorem C, p. 215). Therefore \(\fb\) is a concave function.
\end{remark}

\section{The main estimate}

The following result is the main tool in the proof of Theorem \ref{mainshift}.
\begin{lemma}\label{mainlemma}
Let \(\fb\) be a function satisfying properties (i)-(iii) from Section 4. Fix \(k \geq 1\) and a dyadic interval \(I_0\). For all \(I \in \mathcal{D}_n(I_0),\ 0 \leq n \leq k,\) let the points \(A_I= (\fd_I, \Fd_I, \gd_I, \Gd_I) \in \mathfrak{D}=\mathrm{Dom}\, \fb\) be given. Assume that the points \(A_I\) satisfy the dyadic martingale dynamics, i.e. \(A_I=(A_{I^+}+A_{I^-})/2,\) where \(I^+\) and \(I^-\) are the children of \(I\). For \(K,L \in \mathcal{D}_k(I_0)\), we define the coefficients \(\lambda_{KL}\) by
\[\lambda_{KL}:= \bigg \langle  \frac{\fd_K - \fd_{I_0}}{2^k}, \frac{\gd_L - \gd_{I_0}}{2^k} \bigg \rangle_{X,X^*}  +  \bigg \langle  \frac{\fd_L - \fd_{I_0}}{2^k}, \frac{\gd_K - \gd_{I_0}}{2^k} \bigg \rangle_{X,X^*}.\]

Then
\[\sum_{K,L \in \mathcal{D}_k({I_0})} |\lambda_{KL}| \leq c \cdot 2^{k/2} \bigg( \fb (A_{I_0}) - 2^{-k} \sum_{I \in \mathcal{D}_k(I_0)} \fb(A_I) \bigg),\]
where \(c\) is a positive absolute constant.
\end{lemma}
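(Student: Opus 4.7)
The plan is to combine a Schur--multiplier reduction via Theorem~\ref{grot} with a single application of the Bellman concavity~(iii) along a carefully chosen pair of ``displaced'' points inside $\mathfrak{D}$. Throughout, write $\Delta := \fb(A_{I_0}) - 2^{-k}\sum_{I \in \mathcal{D}_k(I_0)}\fb(A_I)$ for the quantity appearing on the right-hand side.

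\emph{Step 1 (linearization by Theorem~\ref{grot}).} Put $\epsilon_{KL} := \mathrm{sgn}\,\lambda_{KL} \in \{-1,+1\}$, so $\sum_{K,L}|\lambda_{KL}| = \sum_{K,L}\epsilon_{KL}\lambda_{KL}$. Haagerup's factorization $\epsilon = I\cdot\epsilon$ (with $I$ having unit-norm rows and $\epsilon$ having columns of $\ell^2$-norm $2^{k/2}$) yields $\|\epsilon\|_m \le 2^{k/2}$, so $\epsilon/(K_G\cdot 2^{k/2})$ lies in the ball of Schur multipliers of norm $\le K_G^{-1}$. Theorem~\ref{grot} represents this matrix as a pointwise limit of convex combinations of rank-one Schur multipliers $s^{(\alpha)}\otimes t^{(\alpha)}$ with $\|s^{(\alpha)}\|_\infty, \|t^{(\alpha)}\|_\infty \le 1$; since the finite sum $\sum_{K,L}\epsilon_{KL}\lambda_{KL}$ depends continuously on $\epsilon$ in this topology,
$$\sum_{K,L}|\lambda_{KL}| \le K_G\cdot 2^{k/2}\cdot \sup_{\|s\|_\infty,\|t\|_\infty \le 1}\Bigl|\sum_{K,L}s_K t_L\,\lambda_{KL}\Bigr|.$$

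\emph{Step 2 (reduction to a quadratic form).} For $\sigma \in \mathbb{R}^{\mathcal{D}_k(I_0)}$ define $U_\sigma := \sum_K \sigma_K(\fd_K - \fd_{I_0}) \in X$ and $V_\sigma := \sum_K \sigma_K(\gd_K - \gd_{I_0}) \in X^*$. Using $\lambda_{KL} = \lambda_{LK}$ one checks
$$2^{2k}\sum_{K,L}s_K t_L\,\lambda_{KL} = \langle U_s, V_t\rangle_{X,X^*} + \langle U_t, V_s\rangle_{X,X^*}.$$
Since $\sum_K(\fd_K - \fd_{I_0})=0=\sum_K(\gd_K - \gd_{I_0})$, the maps $\sigma \mapsto U_\sigma, V_\sigma$ are unchanged by the mean-shift $\sigma \mapsto \sigma - \bar\sigma\mathbf{1}$; we may therefore assume $\sum_K s_K = \sum_K t_K = 0$ at the cost of $\|s\|_\infty,\|t\|_\infty \le 2$. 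The polarization
$$\langle U_s, V_t\rangle + \langle U_t, V_s\rangle = \langle U_{s+t}, V_{s+t}\rangle - \langle U_s, V_s\rangle - \langle U_t, V_t\rangle$$
then reduces the task to bounding $|\langle U_\sigma, V_\sigma\rangle|$ for balanced $\sigma$ with $\|\sigma\|_\infty \le 4$.

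\emph{Step 3 (Bellman concavity along displaced points).} For such a $\sigma$ set $\beta := 1/\|\sigma\|_\infty$ and $\mu^\pm_K := 2^{-k}(1 \pm \beta\sigma_K)$. The choice of $\beta$ together with the balance of $\sigma$ give $\mu^\pm_K \ge 0$ and $\sum_K \mu^\pm_K = 1$, so $B^\pm := \sum_K \mu^\pm_K A_K$ are convex combinations of $\{A_K\} \subset \mathfrak{D}$, hence lie in $\mathfrak{D}$, and $(B^+ + B^-)/2 = A_{I_0}$. The balance also yields $\fd(B^+) - \fd(B^-) = 2\beta\cdot 2^{-k}U_\sigma$ and $\gd(B^+) - \gd(B^-) = 2\beta\cdot 2^{-k}V_\sigma$. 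Property~(iii) at $A_{I_0}$, combined with the ordinary concavity of $\fb$ (granted by the Remark in Section~4) via $\fb(B^\pm) \ge \sum_K \mu^\pm_K\fb(A_K)$, gives
$$\Delta \ge 4\beta^2\cdot 2^{-2k}|\langle U_\sigma, V_\sigma\rangle|, \qquad \text{i.e.,}\qquad |\langle U_\sigma, V_\sigma\rangle| \le \tfrac14\|\sigma\|_\infty^2\cdot 2^{2k}\Delta.$$
With $\|\sigma\|_\infty \le 4$ in each of the three polarization terms, Steps~1--3 combine to give $\sum_{K,L}|\lambda_{KL}| \le 6K_G\cdot 2^{k/2}\Delta$, which is the stated conclusion.

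The main conceptual step is Step~1: Haagerup's bound $\|\epsilon\|_m \le 2^{k/2}$ for $\pm 1$ matrices of size $2^k$, fed through Grothendieck's theorem, produces exactly the factor $2^{k/2}$ that matches the expected norm $\|S\|_{\lx \to \lx} \sim k\cdot 2^{k/2}\beta_{p,X}$ of a Haar shift of complexity $k$. Once the double sum is thereby decoupled, the Bellman estimate is a one-step application of~(iii), made essentially automatic by the fact that the $B^\pm$ can be explicitly written as convex combinations of the given $A_K$.
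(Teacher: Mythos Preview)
Your proof is correct and follows the same blueprint as the paper: the factor $2^{k/2}$ comes from the Schur-multiplier bound on $\pm 1$ matrices fed through Grothendieck's theorem, and the Bellman drop $\Delta$ is extracted by applying property~(iii) at the displaced pair $B^\pm = 2^{-k}\sum_K(1\pm\beta\sigma_K)A_K$. Two points of execution are cleaner than in the paper and worth flagging. First, where the paper builds a ``modified dyadic martingale'' with intermediate points $A_I^\pm$ and transition probabilities $\theta_I^\pm$ to show $\fb(A_{I_0}^\pm)\ge 2^{-k}\sum_I a_I^\pm\fb(A_I)$, you simply invoke Jensen's inequality for the concave function $\fb$ (granted by the Remark in Section~4); the paper's iteration is just a binary-step proof of the same Jensen bound. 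Second, where the paper passes from the bilinear quantity $\|\Lambda\|_2$ to the quadratic $\|\Lambda\|_1$ by asserting the equivalence $64\|\Lambda\|_1\le\|\Lambda\|_2\le 192\|\Lambda\|_1$ (using symmetry of $\Lambda$ and vanishing row/column sums), you make this reduction explicit via the mean-shift and polarization identity, which is more transparent and yields a smaller absolute constant. The underlying mathematics is the same; your packaging is tighter.
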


\begin{proof}

We introduce the notation
\(\Lambda := \left (\lambda_{KL} \right )_{K,L \in \mathcal{D}_k(I_0)}.\) Assume for the moment that we can find a sequence \((\alpha_I)_{I \in \mathcal{D}_k(I_0)}\) such that \(|\alpha_I| \leq 1/4\) for all \(I \in \mathcal{D}_k(I_0), \sum_{I \in \mathcal{D}_k(I_0)} \alpha_I =0,\) and
\begin{equation}\label{sequence}
\bigg | \sum_{K,L \in \mathcal{D}_k({I_0})} \alpha_K \alpha_L \lambda_{KL} \bigg | \geq c \cdot 2^{-k/2} \sum_{K,L \in \mathcal{D}_k({I_0})} |\lambda_{KL}|.
\end{equation}

Define \(A_{I_0}^{\pm}=(\fd^{\pm}, \Fd^{\pm}, \gd^{\pm}, \Gd^{\pm})\) by
\[A_{I_0}^{\pm}:=2^{-k} \sum_{I \in \mathcal{D}_k(I_0)} (1 \pm \alpha_I)A_I = A_{I_0} \pm 2^{-k} \sum_{I \in \mathcal{D}_k(I_0)} \alpha_I A_I,\]
so \(A_{I_0}=(A_{I_0}^+ + A_{I_0}^-)/2.\) By \eqref{sequence} and the fact that \(\sum_{I \in \mathcal{D}_k(I_0)} \alpha_I =0,\) we have the estimate
\begin{align}\label{est1}
\Big | \big \langle \fd^{\pm} - \fd_{I_0} , \gd^{\pm} - \gd_{I_0} \big \rangle_{X,X^*} \Big | & = \bigg | \bigg \langle 2^{-k} \sum_{I \in \mathcal{D}_k(I_0)} \alpha_I (\fd_I - \fd_{I_0}),  2^{-k} \sum_{I \in \mathcal{D}_k(I_0)} \alpha_I (\gd_I - \gd_{I_0}) \bigg \rangle_{X,X^*} \bigg | \\
& = \bigg | \sum_{K,L \in \mathcal{D}_k({I_0})} \alpha_K \alpha_L \bigg \langle  \frac{\fd_K - \fd_{I_0}}{2^k}, \frac{\gd_L - \gd_{I_0}}{2^k} \bigg \rangle_{X,X^*}  \bigg | \nonumber \\
& =\frac{1}{2} \bigg | \sum_{K,L \in \mathcal{D}_k({I_0})} \alpha_K \alpha_L \lambda_{KL} \bigg | \nonumber \\
& \geq c \cdot 2^{-k/2} \sum_{K,L \in \mathcal{D}_k({I_0})} |\lambda_{KL}|. \nonumber
\end{align}

Let \(a_I^{\pm}:=1 \pm \alpha_I\) and note that \(3/4 \leq a_I^{\pm} \leq 5/4.\)

For \(I \in \mathcal{D}_n(I_0),\ 1 \leq n \leq k,\) let us define
\[A_I^{\pm}:= \left (
\sum_{\substack{
            J \in \mathcal{D}_k(I_0) \\
            J \subseteq I}}
a_J^{\pm}A_J \right ) \left (
\sum_{\substack{
            J \in \mathcal{D}_k(I_0) \\
            J \subseteq I}}
a_J^{\pm} \right )^{-1}.\]
Since the domain of the function \(\fb\) is convex, all these points are in \(\mathfrak{D}\). When \(I \in \mathcal{D}_k(I_0)\) we have \(A_I^+=A_I^-=A_I\), where the \(A_I\)'s are the points from the statement of the lemma.
For \(I \in \mathcal{D}_n(I_0),\ 1 \leq n \leq k,\) we define
\[\theta_I^{\pm}:= \left (
\sum_{\substack{
            J \in \mathcal{D}_k(I_0) \\
            J \subseteq I}}
a_J^{\pm} \right ) \left (
\sum_{\substack{
            J \in \mathcal{D}_k(I_0) \\
            J \subseteq \tilde{I}}}
a_J^{\pm} \right )^{-1}.\]

It is easy to see that \(3/10 \leq \theta_I^{\pm} \leq 5/6\) and
\begin{equation}\label{convexity}
\theta_{I^+}^{\pm} + \theta_{I^-}^{\pm} =1, \qquad A_I^{\pm}=\theta_{I^+}^{\pm} A_{I^+}^{\pm} + \theta_{I^-}^{\pm} A_{I^-}^{\pm}.
\end{equation}

The last equality means that the point \(A_I^+\) is on the line segment with endpoints \(A_{I^+}^+\) and \(A_{I^-}^+\), and similarly for \(A_I^-\). \(\theta_{I^+}^{\pm}\) and \(\theta_{I^-}^{\pm}\) represent the probabilities of moving from the points \(A_I^{\pm}\) to \(A_{I^+}^{\pm}\) and \(A_{I^-}^{\pm}\), respectively.

Since
\begin{align*}
\Big | \big \langle \fd^+ - \fd^- , \gd^+ - \gd^- \big \rangle_{X,X^*} \Big | & = \bigg| \bigg \langle 2 \cdot 2^{-k} \sum_{I \in \mathcal{D}_k(I_0)} \alpha_I \fd_I, 2 \cdot 2^{-k} \sum_{I \in \mathcal{D}_k(I_0)} \alpha_I \gd_I \bigg \rangle_{X,X^*}  \bigg| \\
& = 4 \Big | \big \langle \fd^{\pm} - \fd_{I_0} , \gd^{\pm} - \gd_{I_0} \big \rangle_{X,X^*} \Big |,
\end{align*}
by property (iii) of the Bellman function \(\fb\) we get
\begin{equation}\label{bellmandif}
\big | \left \langle \fd^{\pm} - \fd_{I_0} , \gd^{\pm} - \gd_{I_0} \right \rangle_{X,X^*} \big | \leq \frac{1}{4} \bigg ( \fb(A_{I_0}) - \frac{\fb(A_{I_0}^+)+\fb(A_{I_0}^-)}{2} \bigg ).
\end{equation}

From the concavity of the function \(\fb\) and \eqref{convexity} it follows that
\[\fb(A_I^{\pm}) \geq \theta_{I^+}^{\pm} \fb(A_{I^+}^{\pm}) + \theta_{I^-}^{\pm} \fb(A_{I^-}^{\pm}).\]

Applying now this inequality to \(I \in \mathcal{D}_n(I_0),\ 0 \leq n \leq k-1\), and taking into account that
\[\prod_{\substack{
            J \in \mathcal{D} \\
            I \subseteq J \subsetneq I_0 }}
\theta_J^{\pm} = a_I^{\pm} \bigg (\sum_{J \in \mathcal{D}_k(I_0)} a_J^{\pm} \bigg )^{-1} = 2^{-k} a_I^{\pm}\]
for all \(I \in \mathcal{D}_k(I_0),\) we obtain the estimate
\[\fb(A_{I_0}^{\pm}) \geq 2^{-k} \sum_{I \in \mathcal{D}_k(I_0)} a_I^{\pm} \fb(A_I).\]

Since \(a_I^+ + a_I^-=2\) when \(I \in \mathcal{D}_k(I_0),\) substituting the previous inequality in \eqref{bellmandif} gives
\[\Big | \big \langle \fd^{\pm} - \fd_{I_0} , \gd^{\pm} - \gd_{I_0} \big \rangle_{X,X^*} \Big | \leq \frac{1}{4} \bigg ( \fb(A_{I_0}) - 2^{-k} \sum_{I \in \mathcal{D}_k(I_0)}  \fb(A_I) \bigg ) .\]
Together with \eqref{est1}, this completes the proof of Lemma \ref{mainlemma} under the assumption \eqref{sequence}, which we are now going to prove.

Notice that inequality \eqref{bellmandif} does not depend on property \eqref{sequence} of the sequence \((\alpha_I)_{I \in \mathcal{D}_k(I_0)}.\) Using \eqref{bellmandif}, the equalities from \eqref{est1} and property (ii) of the Bellman function, we have
\[ \bigg | \sum_{K,L \in \mathcal{D}_k({I_0})} \alpha_K \alpha_L \lambda_{KL} \bigg | \leq \frac{1}{2} \fb(A_{I_0}) \leq 2 \beta_{p,X} \Fd_{I_0}^{1/p} \Gd_{I_0}^{1/{p'}},\]
for all sequences \((\alpha_I)_{I \in \mathcal{D}_k(I_0)}\) with \(|\alpha_I| \leq 1/4\) for all \( I \in \mathcal{D}_k(I_0),\) and  \(\sum_{I \in \mathcal{D}_k(I_0)} \alpha_I =0.\)

Let us define
\[\|\Lambda\|_1 := \sup_{\alpha} \bigg | \sum_{K,L \in \mathcal{D}_k({I_0})} \alpha_K \alpha_L \lambda_{KL} \bigg|,\]
where the supremum is taken over all sequences \(\alpha=(\alpha_I)_{I \in \mathcal{D}_k(I_0)}\) with \(\|\alpha\|_{\infty} \leq 1/4\) and \(\sum_{I \in \mathcal{D}_k(I_0)} \alpha_I =0.\) Since we are in a finite-dimensional space, we can find a sequence \(\alpha\) with \(|\alpha_I| \leq 1/4,\ I \in \mathcal{D}_k(I_0),\) and  \(\sum_{I \in \mathcal{D}_k(I_0)} \alpha_I =0,\) such that
\begin{equation}\label{norm1}
\bigg | \sum_{K,L \in \mathcal{D}_k({I_0})} \alpha_K \alpha_L \lambda_{KL} \bigg| = \|\Lambda\|_1.
\end{equation}

Using the symmetry of \(\Lambda\) and the fact that its row and column sums are all zero, it is easy to see that \(\|\Lambda\|_1\) is equivalent to
\[\|\Lambda\|_2 := \sup_{\substack{
                                   \|\alpha\|_{\infty} \leq 1 \\
                                   \|\beta\|_{\infty} \leq 1}}
\bigg| \sum_{K,L \in \mathcal{D}_k({I_0})} \alpha_K \beta_L \lambda_{KL} \bigg|,\]
where we take the supremum over all sequences \(\alpha=(\alpha_I)_{I \in \mathcal{D}_k(I_0)}\) and \(\beta=(\beta_I)_{I \in \mathcal{D}_k(I_0)}.\) More precisely, we have \(64 \|\Lambda\|_1 \leq \|\Lambda\|_2 \leq 192 \|\Lambda\|_1.\) Since we may assume that \(\Lambda\) is not the zero matrix (otherwise the lemma becomes trivially true), \(\|\Lambda\|_1,\) and hence \(\|\Lambda\|_2\), are not \(0.\)

If \(\alpha\) and \(\beta\) are two sequences as above, the matrix \(\Phi=(\Phi_{KL})=(\alpha_K \beta_L)\) is a rank one Schur multiplier of norm \(\|\Phi\|_m\) at most \(1\). The inequality \(\|\Lambda\|_2 \leq 192 \|\Lambda\|_1\) can be rewritten as
\[\sup \big |\langle \Phi, \Lambda \rangle \big | \leq 192 \|\Lambda\|_1,\]
where the supremum is taken over all rank one Schur multipliers \(\Phi\) of norm at most \(1\), and \(\langle \cdot, \cdot \rangle\) stands for the trace inner product.

Using Theorem \ref{grot}, we obtain that
\[\sup \big |\langle M, \Lambda \rangle \big | \leq 192 K_G \|\Lambda\|_1,\]
where the supremum is now taken over all Schur multipliers \(M\) of norm at most \(1\), and \(K_G\) is the (real) Grothendieck constant.

We have \( \sum_{K,L \in \mathcal{D}_k({I_0})} |\lambda_{KL}| = \sup \big | \langle M, \Lambda \rangle \big |,\) where the supremum is taken over all \(2^k \times 2^k\) matrices \(M\) with entries \(\pm 1\). For such a matrix \(M\) we have \(\|M\|_m \leq 2^{k/2}\), by a result of K.R. Davidson and A.P. Donsig (\cite{DaDo07}, Lemma 2.5). Putting everything together, we get the inequality
\begin{equation}\label{sumbdd}
\sum_{K,L \in \mathcal{D}_k({I_0})} |\lambda_{KL}| \leq 192 K_G 2^{k/2} \|\Lambda\|_1.
\end{equation}
Using \eqref{norm1} and \eqref{sumbdd} it follows that there exists a sequence \(\alpha\) with \(|\alpha_I| \leq 1/4\) for all \(I \in \mathcal{D}_k(I_0),\) and  \(\sum_{I \in \mathcal{D}_k(I_0)} \alpha_I =0,\) such that
\[\sum_{K,L \in \mathcal{D}_k(I_0)} |\lambda_{KL}| \leq 192 K_G 2^{k/2} \|\Lambda\|_1 =192 K_G 2^{k/2} \bigg | \sum_{K,L \in \mathcal{D}_k({I_0})} \alpha_K \alpha_L \lambda_{KL} \bigg|,\]
which is what we wanted to show. Therefore, the proof of the lemma is complete.

\end{proof}

\section{Conclusion of the proof of Theorem \ref{mainshift}}
We are now ready to finish the proof of Theorem \ref{mainshift}.

Recall that for all slices \(S_j\) of \(S\) we have
\begin{align*}
& \quad 2 \Big | \left \langle S_j f,g \right \rangle_{\lx,\lxs} \Big| \\
& \qquad \leq \sum_{L \in \mathcal{L}_j} |L|
\sum_{P,Q \in \mathcal{D}_k(L)} \bigg | \bigg \langle  \frac{\langle f \rangle _P -  \langle f \rangle _L}{2^k}, \frac{\langle g \rangle _Q -  \langle g \rangle _L}{2^k}    \bigg \rangle_{X,X^*}  +  \bigg \langle  \frac{\langle f \rangle _Q -  \langle f \rangle _L}{2^k}, \frac{\langle g \rangle _P -  \langle g \rangle _L}{2^k} \bigg \rangle_{X,X^*}   \bigg|.
\end{align*}

Let us fix \(0 \leq j \leq k-1 \) and for all \(I \in \mathcal{L}_j\) define \(A_I:=\Big (\langle f \rangle _I, \big \langle \|f\|^p_X  \big \rangle_I, \langle g \rangle _I, \big \langle \|g\|^{p'}_{X^*} \big \rangle_I \Big).\) Note that all these points are in \(\mathrm{Dom}\, \fb=\mathfrak{D}\). Lemma \ref{mainlemma} says that
\begin{align*}
& |L| \sum_{P,Q \in \mathcal{D}_k(L)} \bigg | \bigg \langle  \frac{\langle f \rangle _P -  \langle f \rangle _L}{2^k}, \frac{\langle g \rangle _Q -  \langle g \rangle _L}{2^k} \bigg \rangle_{X,X^*}  +  \bigg \langle  \frac{\langle f \rangle _Q -  \langle f \rangle _L}{2^k}, \frac{\langle g \rangle _P -  \langle g \rangle _L}{2^k}    \bigg \rangle_{X,X^*}   \bigg| \\
& \qquad \qquad \leq c 2^{k/2} \bigg( |L| \fb(A_L) - \sum_{I \in \mathcal{D}_k(L)} |I| \fb(A_I) \bigg ),
\end{align*}
for all \(L \in \mathcal{L}_j.\) We write this estimate for each \(I \in \mathcal{D}_k(L)\) and then iterate the procedure \(\ell\) times to obtain
\begin{align*}
&  \sum_{\substack{
                    I \in \mathcal{L}_j \\
                    I \subseteq L\\
                    |I| > 2^{-k \ell}|L| }}
|I| \sum_{P,Q \in \mathcal{D}_k(I)} \bigg | \bigg \langle  \frac{\langle f \rangle _P -  \langle f \rangle _I}{2^k}, \frac{\langle g \rangle _Q -  \langle g \rangle _I}{2^k}    \bigg \rangle_{X,X^*}  +  \bigg \langle  \frac{\langle f \rangle _Q -  \langle f \rangle _I}{2^k}, \frac{\langle g \rangle _P -  \langle g \rangle _I}{2^k}    \bigg \rangle_{X,X^*}   \bigg|  \\
& \qquad \qquad \qquad \leq c \cdot 2^{k/2}  \bigg( |L| \fb(A_L) - \sum_{I \in \mathcal{D}_{k \ell}(L)} |I| \fb(A_I) \bigg ) \\
& \qquad \qquad \qquad \leq c \cdot 2^{k/2} \beta_{p,X} |L| \big \langle \|f\|^p_X  \big \rangle_L^{1/p} \big \langle \|g\|^{p'}_{X^*}  \big \rangle_L^{1/{p'}} \\
& \qquad \qquad \qquad \leq c \cdot 2^{k/2} \beta_{p,X} \|f \chi_L\|_{\lx}  \|g \chi_L\|_{\lxs},
\end{align*}
where the second inequality follows from property (ii) of the Bellman function.
\par Letting \(\ell \to \infty\), we have
\begin{align*}
&  \sum_{\substack{
                    I \in \mathcal{L}_j \\
                    I \subseteq L }}
|I| \sum_{P,Q \in \mathcal{D}_k(I)} \bigg | \bigg \langle  \frac{\langle f \rangle _P -  \langle f \rangle _I}{2^k}, \frac{\langle g \rangle _Q -  \langle g \rangle _I}{2^k}    \bigg \rangle_{X,X^*}  +  \bigg \langle  \frac{\langle f \rangle _Q -  \langle f \rangle _I}{2^k}, \frac{\langle g \rangle _P -  \langle g \rangle _I}{2^k}    \bigg \rangle_{X,X^*}   \bigg|  \\
& \qquad \qquad \qquad \leq c \cdot 2^{k/2} \beta_{p,X} \|f \chi_L\|_{\lx}  \|g \chi_L\|_{\lxs}.
\end{align*}

We now cover the real line with intervals \(L \in \mathcal{L}_j\) of length \(2^M\) and apply the last inequality to each \(L\) to obtain that
\begin{align*}
&  \sum_{\substack{
                    I \in \mathcal{L}_j \\
                    |I| \leq 2^M }}
|I| \sum_{P,Q \in \mathcal{D}_k(I)} \bigg | \bigg \langle  \frac{\langle f \rangle _P -  \langle f \rangle _I}{2^k}, \frac{\langle g \rangle _Q -  \langle g \rangle _I}{2^k}    \bigg \rangle_{X,X^*}  +  \bigg \langle  \frac{\langle f \rangle _Q -  \langle f \rangle _I}{2^k}, \frac{\langle g \rangle _P -  \langle g \rangle _I}{2^k}    \bigg \rangle_{X,X^*}   \bigg|  \\
& \qquad \qquad \qquad \leq c \cdot 2^{k/2}  \beta_{p,X} \|f\|_{\lx}  \|g\|_{\lxs}.
\end{align*}

When \(M \to \infty\), we get that the norm of \(S_j\) is bounded by \(c \cdot 2^{k/2} \beta_{p,X}.\) Since \(S\) was decomposed into \(k\) slices, it follows that the operator norm of \(S\) is bounded by \(c \cdot k 2^{k/2} \beta_{p,X},\) therefore the proof of Theorem \ref{mainshift} is complete.

\bibliographystyle{plain}

\begin{bibsection}
\begin{biblist}

\bib{Bo86}{book}{
  author = {J. Bourgain},
  title = {Vector-valued singular integrals and the {$H^1$}-{BMO} duality},
  booktitle = {Probability theory and harmonic analysis (Cleveland, Ohio, 1983)},
  year = {1986},
  volume = {98},
  series = {Monogr. Textbooks Pure Appl. Math.},
  pages = {1-19},
  publisher = {Dekker, New York},
  journal = {Probability theory and harmonic analysis (Cleveland, Ohio, 1983)},
  owner = {Andrei},
  timestamp = {2013.02.28}
}

\bib{Bu83}{book}{
  author = {D. L. Burkholder},
  title = {A geometric condition that implies the existence of certain singular
	integrals of {B}anach-space-valued functions},
  booktitle = {Conference on harmonic analysis in honor of Antoni Zygmund, Vol.
	I, II (Chicago, Ill., 1981)},
  year = {1983},
  series = {Wadsworth Math. Ser.},
  pages = {270-286},
  publisher = {Wadsworth, Belmont, CA},
  journal = {Conference on harmonic analysis in honor of Antoni Zygmund, Vol.
	I, II (Chicago, Ill., 1981)},
  owner = {Andrei},
  timestamp = {2013.02.28}
}

\bib{CoFe74}{article}{
  author = {R. R. Coifman and C. Fefferman},
  title = {Weighted norm inequalities for maximal functions and singular integrals},
  journal = {Studia Math.},
  year = {1974},
  volume = {51},
  pages = {241-250},
  owner = {Andrei},
  timestamp = {2013.02.28}
}

\bib{DaDo07}{article}{
  author = {K. R. Davidson and A. P. Donsig},
  title = {Norms of {S}chur multipliers},
  journal = {Illinois J. Math.},
  year = {2007},
  volume = {51},
  pages = {743-766},
  number = {3},
  owner = {Andrei},
  timestamp = {2013.03.01}
}

\bib{Fi90}{book}{
  author = {T. Figiel},
  title = {Singular integral operators: a martingale approach},
  booktitle = {Geometry of Banach spaces (Strobl, 1989)},
  year = {1990},
  volume = {158},
  series = {London Math. Soc. Lecture Note Ser.},
  pages = {95-110},
  publisher = {Cambridge Univ. Press, Cambridge},
  owner = {Andrei},
  timestamp = {2013.02.28}
}

\bib{GeMSSa10}{article}{
  author = {S. Geiss, S. Montgomery-Smith and E. Saksman},
  title = {On singular integral and martingale transforms},
  journal = {Trans. Amer. Math. Soc.},
  year = {2010},
  volume = {362},
  pages = {553-575},
  number = {2},
  owner = {Andrei},
  timestamp = {2013.03.01}
}

\bib{HuMuWh73}{article}{
  author = {R. A. Hunt and B. Muckenhoupt and R. L. Wheeden},
  title = {Weighted norm inequalities for the conjugate function and {H}ilbert
	transform},
  journal = {Trans. Amer. Math. Soc.},
  year = {1973},
  volume = {176},
  pages = {227-251},
  owner = {Andrei},
  timestamp = {2013.02.28}
}

\bib{Hy12a}{article}{
  author = {T. P. Hyt\"{o}nen},
  title = {The sharp weighted bound for general {C}alder\'{o}n-{Z}ygmund operators},
  journal = {Ann. of Math. (2)},
  year = {2012},
  volume = {175},
  pages = {1473-1506},
  number = {3},
  owner = {Andrei},
  timestamp = {2013.03.01}
}

\bib{Hy12b}{article}{
  author = {T. P. Hyt\"{o}nen},
  title = {Vector-valued singular integrals revisited---with random dyadic cubes},
  journal = {Bull. Pol. Acad. Sci. Math.},
  year = {2012},
  volume = {60},
  pages = {269-283},
  number = {3},
  owner = {Andrei},
  timestamp = {2013.03.01}
}

\bib{Hy07}{book}{
  author = {T. P. Hyt\"{o}nen},
  title = {Aspects of probabilistic {L}ittlewood-{P}aley theory in {B}anach
	spaces},
  booktitle = {Banach spaces and their applications in analysis},
  year = {2007},
  pages = {343-355},
  publisher = {Walter de Gruyter, Berlin},
  owner = {Andrei},
  timestamp = {2013.03.04}
}

\bib{Hy11}{article}{
  author = {T. P. Hytönen},
  title = {Representation of singular integrals by dyadic operators, and the
	{$A_2$} theorem},
  year = {2011},
  volume = {Preprint, arXiv:1108.5119},
  owner = {Andrei},
  timestamp = {2013.03.01}
}

\bib{Pa09}{article}{
  author = {J. Parcet},
  title = {Pseudo-localization of singular integrals and noncommutative {C}alderón-{Z}ygmund
	theory},
  journal = {J. Funct. Anal.},
  year = {2009},
  volume = {256},
  pages = {509-593},
  number = {2},
  owner = {Andrei},
  timestamp = {2013.03.01}
}

\bib{Pe07}{article}{
  author = {S. Petermichl},
  title = {The sharp bound for the {H}ilbert transform on weighted {L}ebesgue
	spaces in terms of the classical {$A_p$} characteristic},
  journal = {Amer. J. Math.},
  year = {2007},
  volume = {129},
  pages = {1355-1375},
  number = {5},
  owner = {Andrei},
  timestamp = {2013.02.28}
}

\bib{PeVo02}{article}{
  author = {S. Petermichl and A. Volberg},
  title = {Heating of the {A}hlfors-{B}eurling operator: weakly quasiregular
	maps on the plane are quasiregular},
  journal = {Duke Math. J.},
  year = {2002},
  volume = {112},
  pages = {281-305},
  number = {2},
  owner = {Andrei},
  timestamp = {2013.02.28}
}

\bib{Pi12}{article}{
  author = {G. Pisier},
  title = {Grothendieck's theorem, past and present},
  journal = {Bull. Amer. Math. Soc. (N.S.)},
  year = {2012},
  volume = {49},
  pages = {237-323},
  number = {2},
  owner = {Andrei},
  timestamp = {2013.03.01}
}

\bib{RoVa73}{book}{
  title = {Convex functions},
  publisher = {Academic Press},
  year = {1973},
  author = {A. W. Roberts and D. E. Varberg},
  volume = {57},
  series = {Pure and Applied Mathematics},
  address = {New York-London},
  owner = {Andrei},
  timestamp = {2013.03.01}
}

\bib{RdF86}{book}{
  author = {J. L. {Rubio de Francia}},
  title = {Martingale and integral transforms of {B}anach space valued functions},
  booktitle = {Probability and {B}anach spaces (Zaragoza, 1985)},
  year = {1986},
  volume = {1221},
  series = {Lecture Notes in Math.},
  pages = {195-222},
  publisher = {Springer, Berlin},
  chapter = {Martingale and integral transforms of Banach space valued functions},
  owner = {Andrei},
  timestamp = {2013.04.23}
}

\bib{Tr11}{article}{
  author = {S. Treil},
  title = {Sharp {$A_2$} estimates of {H}aar shifts via {B}ellman function},
  year = {2011},
  volume = {Preprint, arXiv:1105.2252},
  owner = {Andrei},
  timestamp = {2013.03.01}
}

\bib{Wi00}{article}{
  author = {J. Wittwer},
  title = {A sharp estimate on the norm of the martingale transform},
  journal = {Math. Res. Lett.},
  year = {2000},
  volume = {7},
  pages = {1-12},
  number = {1},
  owner = {Andrei},
  timestamp = {2013.02.28}
}

\end{biblist}
\end{bibsection}
\end{document}